\documentclass{article}
\usepackage{amsmath,amsthm,amssymb}
\usepackage{enumerate}
\usepackage{graphicx}
\usepackage{cite}
\usepackage{comment}
\usepackage{oands}
\usepackage{tabularx}
\usepackage{multirow}
\usepackage[margin=1in]{geometry}
\usepackage[pagewise,mathlines]{lineno}  
\usepackage[normalem]{ulem} 
\usepackage{subfigure}  
\usepackage[all]{xy}
\usepackage[usenames,dvipsnames]{xcolor}

\usepackage{array}

\usepackage[pdftitle={Dimension transformation formula for conformal maps into the complement of an SLE curve},
  pdfauthor={Ewain Gwynne, Nina Holden, and Jason Miller},
colorlinks=true,linkcolor=NavyBlue,urlcolor=RoyalBlue,citecolor=PineGreen,bookmarks=true,bookmarksopen=true,bookmarksopenlevel=2,unicode=true,linktocpage]{hyperref}

\numberwithin{equation}{section}

\setcounter{tocdepth}{1}

\theoremstyle{plain}
\newtheorem{thm}{Theorem}[section]

\newtheorem{lem}[thm]{Lemma}

 
\theoremstyle{definition}
\newtheorem{defn}[thm]{Definition}
\newtheorem{remark}[thm]{Remark}

\def\@rst #1 #2other{#1}
\newcommand\MR[1]{\relax\ifhmode\unskip\spacefactor3000 \space\fi
  \MRhref{\expandafter\@rst #1 other}{#1}}
\newcommand{\MRhref}[2]{\href{http://www.ams.org/mathscinet-getitem?mr=#1}{MR#2}}


\newcommand{\dsb}{\begin{adjustwidth}{2.5em}{0pt}
\begin{footnotesize}}
\newcommand{\dse}{\end{footnotesize}
\end{adjustwidth}}

\newcommand{\ssb}{\begin{adjustwidth}{2.5em}{0pt}}
\newcommand{\sse}{\end{adjustwidth}}

\newcommand{\aryb}{\begin{eqnarray*}}
\newcommand{\arye}{\end{eqnarray*}}
\def\alb#1\ale{\begin{align*}#1\end{align*}}
\newcommand{\eqb}{\begin{equation}}
\newcommand{\eqe}{\end{equation}}
\newcommand{\eqbn}{\begin{equation*}}
\newcommand{\eqen}{\end{equation*}}

\newcommand{\BB}{\mathbb}
\newcommand{\ol}{\overline}
\newcommand{\ul}{\underline}
\newcommand{\op}{\operatorname}

\newcommand{\eqD}{\overset{d}{=}}
\newcommand{\ep}{\epsilon}
\newcommand{\rta}{\rightarrow}

\newcommand{\wt}{\widetilde}
\newcommand{\wh}{\widehat} 
\newcommand{\mcl}{\mathcal}

\newcommand{\bdy}{\partial}

\newcommand*\patchAmsMathEnvironmentForLineno[1]{  \expandafter\let\csname old#1\expandafter\endcsname\csname #1\endcsname
  \expandafter\let\csname oldend#1\expandafter\endcsname\csname end#1\endcsname
  \renewenvironment{#1}     {\linenomath\csname old#1\endcsname}     {\csname oldend#1\endcsname\endlinenomath}}\newcommand*\patchBothAmsMathEnvironmentsForLineno[1]{  \patchAmsMathEnvironmentForLineno{#1}  \patchAmsMathEnvironmentForLineno{#1*}}\AtBeginDocument{\patchBothAmsMathEnvironmentsForLineno{equation}\patchBothAmsMathEnvironmentsForLineno{align}\patchBothAmsMathEnvironmentsForLineno{flalign}\patchBothAmsMathEnvironmentsForLineno{alignat}\patchBothAmsMathEnvironmentsForLineno{gather}\patchBothAmsMathEnvironmentsForLineno{multline}}

\newcommand{\R}{{\mathbb R}}

\begin{document}

\author{
\begin{tabular}{c}Ewain Gwynne\\[-5pt]\small MIT\end{tabular}\;
\begin{tabular}{c}Nina Holden\\[-5pt]\small MIT\end{tabular}\;
\begin{tabular}{c}Jason Miller\\[-5pt]\small Cambridge\end{tabular}}

\title{Dimension transformation formula for conformal maps into the complement of an SLE curve}
\date{   }

\maketitle 

\begin{abstract} 
We prove a formula relating the Hausdorff dimension of a deterministic Borel subset of $\mathbb R$ and the Hausdorff dimension of its image under a conformal map from the upper half-plane to a complementary connected component of an SLE$_\kappa$ curve for $\kappa \not =4$. Our proof is based on the relationship between SLE and Liouville quantum gravity together with the one-dimensional KPZ formula of Rhodes-Vargas (2011) and the KPZ formula of Gwynne-Holden-Miller (2015). 
As an intermediate step we prove a KPZ formula which relates the Euclidean dimension of a subset of an SLE$_\kappa$ curve for $\kappa \in (0,4)\cup(4,8)$ and the dimension of the same set with respect to the $\gamma$-quantum natural parameterization of the curve induced by an independent Gaussian free field, $\gamma = \sqrt \kappa \wedge (4/\sqrt\kappa)$.
\end{abstract}



\section{Introduction}
\label{sec-intro}

\subsection{Overview} 
\label{sec-overview}
The Schramm-Loewner evolution (SLE$_\kappa$) is a family of conformally invariant random fractal curves in two dimensions, originally introduced in~\cite{schramm0}. SLE curves arise as the scaling limit of a variety of models in statistical physics; see, e.g., \cite{lsw-lerw-ust, smirnov-ising, ss-explorer, ss-dgff, gl-contours}.

There are various ways to quantify the precise manner in which the SLE$_\kappa$ curve is fractal. Suppose, for concreteness, that $\eta$ is a chordal SLE$_\kappa$ from $0$ to $\infty$ in $\BB H$ with hulls $(K_t)$ and for $t>0$ let $f_t : \BB H\setminus K_t \rta \BB H$ be its time $t$ centered Loewner map (i.e.\ $f_t(\eta(t)) = 0$, $f_t(\infty) = \infty$, and $\lim_{z\rta\infty} f_t(z)/z =1$). The most basic measure of the fractality of $\eta$ is its Hausdorff dimension, which was shown to be $(1+\kappa/8) \wedge 2$ in~\cite{beffara-dim}. One can also consider the H\"older regularity of either $\eta$ itself or the maps $f_t^{-1}$. The optimal H\"older exponent for the SLE curve is computed in~\cite{lawler-viklund-holder} (see also~\cite{schramm-sle,lind-holder} for intermediate results). The optimal H\"older exponent for $f_t^{-1}$ is not known, but is estimated in~\cite{lind-holder}. 

Another way to measure the fractality of $\eta$ is the multifractal spectrum. This is the function which gives, for each $s\in [-1,1]$, the Hausdorff dimension of the set of points in $x \in \BB R$ such that $|(f_t^{-1})'(x + iy)| = y^{-s + o_y(1)}$ as $y \rta 0$.   
The multifractal spectrum was predicted non-rigorously by Duplantier in~\cite{dup-mf-spec-bulk} (see also~\cite{dup-hm99,dup-mf99} for earlier predictions in special cases) and computed rigorously in~\cite{gms-mf-spec}. 
There are a number of other quantities related to the multifractal spectrum which have been computed either rigorously or non-rigorously. These include the winding spectrum (predicted in~\cite{binder-dup-winding1,binder-dup-winding2}), higher multifractal spectra depending on the derivative behavior on both sides of the curve (predicted in~\cite{dup-higher-mf}), the integral means spectrum (rigorous computations of different versions given in~\cite{bel-smirnov-hm-sle,gms-mf-spec,dup-coefficient,ly-spec-avg,ly-spec-new}), the multifractal spectrum at the tip (computed in~\cite{lawler-viklund-tip}), and the boundary multifractal spectrum (computed in~\cite{abv-bdy-spec}). 

In this paper, we will consider a different measure of the fractality of $\eta$, namely the manner in which the Hausdorff dimension of a subset $Y\subset \BB R$ transforms under the inverse centered Loewner map $f_t^{-1}$ when $f_t^{-1}(Y) \subset \eta$. We will prove a formula (Theorem~\ref{thm-uniform-dim-time}) for $\dim_{\mcl H} f_t^{-1}(Y)$ in terms of $\dim_{\mcl H} Y$ when $Y$ is chosen independently of $\eta$. We also prove a variant (Theorem~\ref{thm-uniform-dim}) for a conformal map from $\BB H$ to a complementary connected component of a whole SLE$_\kappa$ curve, $\kappa \in (0,4) \cup (4,8)$. Our formula appears to be closely related to the multifractal spectrum of SLE; see Problem~\ref{item-multifractal} in Section~\ref{sec-open-problems}. 

Although our main results are statements about the conformal geometry of SLE, our proofs are based on the relationship between SLE and Liouville quantum gravity (LQG). For $\gamma \in (0,2)$ and a domain $D\subset \BB C$, Liouville quantum gravity is the random Riemannian metric
\eqb \label{eqn-lqg0}
e^{\gamma h}(dx^2 + d y^2),
\eqe 
where $h$ is some variant of the Gaussian free field~\cite{shef-gff,ss-contour} (GFF) on $D$ and $dx^2 + dy^2$ is the Euclidean metric. The formula~\eqref{eqn-lqg0} does not make rigorous sense since $h$ is only a distribution (in the sense of Schwartz), not a pointwise-defined function. However, one can make rigorous sense of the volume form associated with LQG. In particular, there exists a measure $\mu_h$ on $D$ which is the a.s.\ limit of regularized versions of $e^{\gamma h(z)} \,dz$, where $dz$ is the Euclidean volume form (i.e., Lebesgue measure). This measure is a special case of \emph{Gaussian multiplicative chaos}~\cite{kahane,rhodes-vargas-review}; see also~\cite{shef-kpz}.
The measure $\mu_h$ is called the \emph{$\gamma$-quantum area measure} induced by $h$. Similarly, there is also a \emph{$\gamma$-quantum length measure} $\nu_h$ defined on certain curves in $D$ which is the a.s.\ limit of regularized versions of $e^{(\gamma/2) h(z)} \, |dz|$, where $|dz|$ is the Euclidean length element. It is shown in~\cite{shef-zipper} that $\nu_h$ can be defined on SLE$_\kappa$-type curves sampled independently from $h$ provided $\kappa \in (0,4)$ and $\gamma = \sqrt\kappa$.

The KPZ formula~\cite{kpz-scaling} relates the Euclidean fractal dimension and ``$\gamma$-quantum fractal dimension" of a random set $X\subset D$ independent from $h$. There are various rigorous versions of this formula using different notions of dimension; see~\cite{aru-kpz,grv-kpz,bjrv-gmt-duality,benjamini-schramm-cascades,wedges,shef-renormalization,shef-kpz,rhodes-vargas-log-kpz,ghm-kpz,gp-kpz}. Our main result will be proven by means of two versions of the KPZ formula, which will be used to express $\dim_{\mcl H} f_t^{-1}(Y)$ and $\dim_{\mcl H} Y$, respectively, in terms of the same quantum dimension. The first version of the KPZ formula (stated as Theorem~\ref{thm-length-kpz} below) relates the Euclidean dimension of a subset $X$ of an SLE$_\kappa$ curve $\eta$ for $\kappa \in (0,4)$ to the dimension of $\eta^{-1}(X)$, when $\eta$ is parameterized by $\gamma$-quantum length with respect to an independent GFF. This formula will be deduced from another KPZ formula, that of~\cite{ghm-kpz}. The other KPZ formula we will use directly is the boundary measure KPZ formula appearing in~\cite{rhodes-vargas-log-kpz}. 

Suppose $\kappa \not= 4$, $\eta$ is an SLE$_\kappa$-type curve, and $h$ is some variant of the GFF, independent from $\eta$. There is a natural quantum parameterization of $\eta$ with respect to $h$, which is different in each of the three phases of $\kappa$:
\begin{enumerate}
\item For $\kappa \in (0,4)$, we parameterize $\eta$ by $\gamma = \sqrt{\kappa}$-quantum length with respect to $h$. 
\item For $\kappa \in (4,8)$, we parameterize $\eta$ by $\gamma = 4/\sqrt{\kappa}$-quantum natural time with respect to $h$ (which is defined in~\cite{wedges} and reviewed in Section~\ref{sec-sle-kpz'} below). 
\item For $\kappa \geq 8$, we parameterize $\eta$ by $\gamma= 4/\sqrt{\kappa}$-quantum mass with respect to $h$. 
\end{enumerate}
The results of the present paper and~\cite{ghm-kpz} imply KPZ formulas for the dimension of $\eta^{-1}(X)$, where $X$ is a subset of $\eta$ which is independent from $h$, in each of the above three cases. Indeed, our Theorem~\ref{thm-length-kpz} is such a KPZ formula for SLE$_\kappa$ curve for $\kappa \in (0,4)$ equipped with the quantum length parameterization.  We will also prove in this paper an analogous KPZ formula in the case when $\kappa \in (4,8)$ and $\eta$ is parameterized by quantum natural time (Theorem~\ref{thm-length-kpz'}); the proof is very similar to that of Theorem~\ref{thm-length-kpz}. The KPZ formula~\cite[Theorem~1.1]{ghm-kpz} together with an absolute continuity argument immediately implies an analogous KPZ formula when $\kappa \geq 8$ and $h$ is parameterized by quantum mass with respect to $h$. 

We also remark that the recent paper \cite{zhan-nat-param} proves a Euclidean variant of Theorems~\ref{thm-length-kpz} and \ref{thm-length-kpz'}. The author shows that for an SLE$_\kappa$ $\eta$, $\kappa\in(0,8)$, with the natural parameterization, and any deterministic closed set $Y\subset\R$, the Hausdorff dimension of $\eta(Y)$ is a.s.\ equal to $1+\frac{\kappa}{8}$ times the Hausdorff dimension of $Y$.

\subsubsection*{Acknowledgements}
E.G.\ was supported by the U.S. Department of Defense via an NDSEG fellowship. N.H.\ was supported by a fellowship from the Norwegian Research Council. J.M.\ was partially supported by DMS-1204894.  The authors thank Ilia Binder, Greg Lawler, Scott Sheffield, and Xin Sun for helpful discussions.

\subsection{Main results}
\label{sec-main-results}

For $\kappa > 0$ and $d\in [0,1]$, define
\eqb \label{eqn-dim-function}
\Phi_\kappa(d) :=  \frac{1}{32\kappa} \left(4 + \kappa - \sqrt{(4 + \kappa)^2 - 16 \kappa d} \right) 
\left(12 + 3 \kappa + \sqrt{(4 + \kappa)^2 - 16 \kappa d } \right) .
\eqe  
We remark that $\Phi_\kappa=\Phi_{16/\kappa}$. 
Our first theorem relates the Hausdorff dimension of a deterministic set $Y\subset \BB R$ and the Hausdorff dimension of the subset of an SLE curve obtained by ``zipping up" the set $Y$ into the curve by means of a conformal map.

\begin{thm}  \label{thm-uniform-dim-time}
Let $\kappa  > 0$, $\kappa \not=4$, and let $\eta$ be a chordal SLE$_\kappa$ from $0$ to $\infty$ in $\BB H$, parameterized by half-plane capacity (resp.\ a radial SLE$_\kappa$ from $1$ to $0$ in $\BB D$, parameterized by minus log conformal radius, or a whole-plane SLE$_\kappa$ from $0$ to $\infty$ parameterized by capacity). Let $t > 0$ and let $f_t$ be the time $t$ centered Loewner map for $\eta$. Let $Y\subset \BB R$ (resp.\ $Y\subset \bdy\BB D$) be a deterministic Borel set. Almost surely, on the event $\{f_t^{-1}(Y) \subset \eta\}$ we have
\eqb \label{eqn-uniform-dim-time}
\dim_{\mcl H} f_t^{-1}( Y) = \Phi_\kappa\left( \dim_{\mcl H} Y \right)  
\eqe 
with $\Phi_\kappa$ as in~\eqref{eqn-dim-function}.
\end{thm}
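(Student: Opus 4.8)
The plan is to prove Theorem~\ref{thm-uniform-dim-time} by sandwiching both $\dim_{\mcl H} Y$ and $\dim_{\mcl H} f_t^{-1}(Y)$ between a common \emph{quantum} dimension, computed with respect to an independent Gaussian free field, and then eliminating that intermediate quantity. Concretely, fix the chordal case for definiteness (the radial and whole-plane cases will follow by local absolute continuity of the relevant fields and curves, since Hausdorff dimension is a local quantity). Sample a suitable variant $h$ of the GFF on $\BB H$ independently of $\eta$ — the natural choice being a quantum wedge or a free-boundary GFF with $\gamma = \sqrt\kappa \wedge (4/\sqrt\kappa)$ — so that $\eta$ under $f_t^{-1}$, which lands inside the SLE curve, carries a well-defined quantum parameterization (quantum length for $\kappa\in(0,4)$, quantum natural time for $\kappa\in(4,8)$, quantum mass for $\kappa\geq 8$). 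The point is that the conformal map $f_t$ not only transports Lebesgue-type boundary structure but, by the LQG coordinate change formula, transports the quantum length/natural-time measure on the curve to a quantum-type boundary measure on $\partial\BB H = \BB R$.

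The key steps, in order, are as follows. \emph{Step 1:} Using Theorem~\ref{thm-length-kpz} (for $\kappa\in(0,4)$), Theorem~\ref{thm-length-kpz'} (for $\kappa\in(4,8)$), or \cite[Theorem~1.1]{ghm-kpz} together with an absolute continuity argument (for $\kappa\geq 8$), express the Euclidean Hausdorff dimension of the set $X := f_t^{-1}(Y) \subset \eta$ in terms of the dimension $\dim_{\mcl H}\eta^{-1}(X)$ of its preimage under the quantum parameterization of $\eta$; this gives a KPZ-type relation $\dim_{\mcl H} X = \psi_\kappa(\dim_{\mcl H}\eta^{-1}(X))$ for an explicit quadratic $\psi_\kappa$. \emph{Step 2:} Observe that, because $f_t$ is conformal and the quantum length measure transforms by the LQG coordinate-change rule, the set $\eta^{-1}(X)$ has the same image — up to a bijection that is bi-Lipschitz between the quantum parameterization of $\eta$ and a $\gamma$-quantum boundary measure $\nu_{\wt h}$ on $\BB R$, where $\wt h := h\circ f_t + Q\log|f_t'|$ (with $Q = 2/\gamma + \gamma/2$) is the pulled-back field — so that $\dim_{\mcl H}\eta^{-1}(X)$ equals the dimension of $Y$ measured with respect to $\nu_{\wt h}$. \emph{Step 3:} Apply the one-dimensional boundary KPZ formula of Rhodes--Vargas \cite{rhodes-vargas-log-kpz} to the field $\wt h$ (which is mutually absolutely continuous, away from $0$ and $\infty$, with respect to a free-boundary GFF, since $f_t$ is a smooth diffeomorphism of neighborhoods in $\overline{\BB H}\setminus\{\eta(t)\}$), to obtain $\dim_{\mcl H}^{\nu_{\wt h}} Y = \phi_\gamma(\dim_{\mcl H} Y)$ for the Rhodes--Vargas quadratic $\phi_\gamma$. \emph{Step 4:} Combine: $\dim_{\mcl H} f_t^{-1}(Y) = \psi_\kappa\big(\phi_\gamma^{-1}\big)$ evaluated appropriately — more precisely, chain the two relations $\dim_{\mcl H} f_t^{-1}(Y) = \psi_\kappa(u)$ and $\dim_{\mcl H} Y = \phi_\gamma(u)$ through the common quantum dimension $u$, solve for $u$ in terms of $\dim_{\mcl H} Y$, and substitute. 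A direct computation with $\gamma^2 = \kappa$ (resp.\ $\gamma^2 = 16/\kappa$) then must reproduce exactly the function $\Phi_\kappa$ of~\eqref{eqn-dim-function}; the identity $\Phi_\kappa = \Phi_{16/\kappa}$ is a consistency check here, reflecting the duality $\gamma \leftrightarrow 4/\gamma$.

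The main obstacle I expect is \emph{Step 2} together with the absolute continuity bookkeeping in \emph{Step 3}: one must carefully justify that the quantum parameterization of $\eta$ and the quantum boundary measure of $\BB R$ pulled back through $f_t$ really do agree under $f_t$, uniformly in the randomness, and that this identification is bi-Lipschitz on the relevant (random) sets so that Hausdorff dimensions are genuinely preserved rather than merely bounded. This requires knowing that the quantum length measure is conformally covariant as a measure on curves (not just that it exists), controlling the Radon--Nikodym derivative between $\wt h$ and a free-boundary GFF near the marked points $0, \eta(t), \infty$ where $f_t$ degenerates, and ensuring the a.s.\ statements can be upgraded from ``for a fixed $Y$'' through the event $\{f_t^{-1}(Y)\subset\eta\}$ without measurability or null-set issues. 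A secondary but purely computational hurdle is verifying that the composition of the two KPZ quadratics collapses to the clean closed form $\Phi_\kappa$; this is routine algebra but must be carried out to confirm the stated formula, and it is reassuring that the answer is manifestly invariant under $\kappa \mapsto 16/\kappa$.
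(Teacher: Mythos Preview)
Your overall architecture matches the paper's: apply a curve-side KPZ formula and the Rhodes--Vargas boundary KPZ, link them via the LQG coordinate change, and eliminate the common quantum dimension. There are, however, two genuine gaps.

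First, Step~3: the claim that the transported field is locally absolutely continuous with respect to a free-boundary GFF ``since $f_t$ is a smooth diffeomorphism'' fails exactly on the interval of $\BB R$ you need. Conditionally on $\eta$, points $w_1,w_2 \in \BB H$ near that interval are sent by $f_t^{-1}$ to points in the \emph{interior} of $\BB H$ near $\eta$, so the covariance of $h\circ f_t^{-1}$ there behaves like $-\log|w_1-w_2|$ rather than the $-2\log|w_1-w_2|$ of a free-boundary GFF near $\BB R$; the laws are not locally absolutely continuous, and adding the (conditionally deterministic) drift $Q\log|(f_t^{-1})'|$ does not change the covariance. What rescues the argument is the reverse SLE/GFF coupling \cite[Theorem~1.2]{shef-zipper}: taking $h = h_0 + \frac{2}{\sqrt\kappa}\log|\cdot|$ with $h_0$ a free-boundary GFF independent of $\eta$, one has $h^t := h \circ f_t^{-1} + Q\log|(f_t^{-1})'| \eqD h$ modulo additive constant, \emph{unconditionally} in $\eta$. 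The extra randomness of $\eta$ is precisely what restores the free-boundary covariance. This distributional identity --- not an absolute-continuity estimate --- is the missing ingredient that makes Rhodes--Vargas applicable to $(h^t, Y)$.

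Second, for $\kappa > 4$ the parameterization you propose is the wrong one. The LQG coordinate change transports $\nu_{h^t}$ on $\BB R$ to the $\gamma$-quantum \emph{length} along $\partial K_t$, not to the quantum natural time (or quantum mass) of $\eta$; these are genuinely different measures with different scaling. Since $f_t^{-1}(Y)$ sits on the outer boundary $\eta_t^R := \partial K_t \cap \BB H$, which by SLE duality is an SLE$_{16/\kappa}$-type curve, the paper applies Theorem~\ref{thm-length-kpz} (with $16/\kappa$ in place of $\kappa$) to $\eta_t^R$ parameterized by quantum length, rather than Theorem~\ref{thm-length-kpz'} to $\eta$. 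Your Step~2 identification between $\eta^{-1}(X)$ in the quantum-natural-time parameterization and the $\nu_{\wt h}$-dimension of $Y$ therefore does not hold as stated.
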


\begin{remark} \label{remark-kpz}
For $\gamma \in (0,2)$ and $d\in [0,1]$, let
\eqb \label{eqn-kpz-function}
  \Psi_\gamma(d) := \left(1 + \frac{\gamma^2}{4} \right) d  -  \frac{\gamma^2}{4} d^2 
\eqe 
be the quadratic function appearing in the one dimensional KPZ formula. 
With $\gamma = \sqrt\kappa$ and $\Psi_\gamma$ as in~\eqref{eqn-kpz-function}, the function of~\eqref{eqn-dim-function} is given by $\Phi_\kappa(d) =2 \Psi_\gamma \left( \frac12 \Psi_\gamma^{-1}(d) \right)$, where $\Psi_\gamma^{-1}$ is chosen so as to take values in $[0,1]$. Our proof of Theorem~\ref{thm-uniform-dim-time} reflects this relationship. Indeed, we prove the theorem by expressing both $\dim_{\mcl H} f_t^{-1}(Y)$ and $\dim_{\mcl H} Y$ in terms of the $\gamma$-quantum dimension of $Y$ with respect to the length measure induced by a certain GFF coupled with $\eta$. This is done using the KPZ formulas of Theorem~\ref{thm-length-kpz} below and~\cite[Theorem~4.1]{rhodes-vargas-log-kpz}, respectively. Theorem~\ref{thm-length-kpz} is itself proven using~\cite[Theorem~1.1]{ghm-kpz}. 
\end{remark}

We also have a variant of Theorem~\ref{thm-uniform-dim-time} when we consider a complementary connected component of a whole SLE curve, rather than a curve stopped at a fixed time. We state the theorem for chordal SLE$_\kappa(\rho^L ; \rho^R)$ and whole-plane SLE$_\kappa(\rho)$ processes, which are constructed for $\rho^L , \rho^R > -2$ and $\rho >-2$ in~\cite[Section~2.2]{ig1} and~\cite[Section~2.1]{ig4}, respectively (see also Section~\ref{sec-prelim}). See Figure~\ref{fig1} for an illustration.

\begin{thm}  \label{thm-uniform-dim}
Let $\kappa \in (0,4) \cup (4,8)$, and suppose that we are in one of the following situations. 
\begin{itemize}
\item $D\subset \BB C$ is a simply connected domain which is not all of $\BB C$, $\rho^L , \rho^R >-2$, and $\eta$ is a chordal SLE$_\kappa(\rho^L ; \rho^R)$ in $D$ with some choice of starting point and target point and force points immediately to the left and right of the starting point. 
\item $D = \BB C$, $\rho > -2$, and $\eta$ is a whole-plane SLE$_\kappa(\rho)$ between two points in $\BB C\cup \{\infty\}$. 
\end{itemize}
Let $\mcl U$ be the set of connected components of $D \setminus \eta$. For $U\in \mcl U$, let $x_U$ (resp.\ $y_U$) be the point visited by $\eta$ at the time at which it starts (resp.\ finishes) tracing $\partial U$. Define the following subsets of $\mcl U$.
\begin{itemize}
\item $\mcl U^+$ (resp.\ $\mcl U^-$) is the set of $U \in \mcl U$ for which $x_U \not= y_U$ and the counterclockwise (resp.\ clockwise) arc of $\bdy U$ from $x_U$ to $y_U$ is traced by $\eta$. 
\item $\mcl U^0$ is the set of $U \in \mcl U$ for which $x_U = y_U$. 
\end{itemize}
For $U \in \mcl U^- \cup \mcl U^+$, let $g_U : \BB H \rta U$ be a conformal map which takes $0$ to $x_U$ and $\infty$ to $y_U$ (chosen in some manner which depends on $\eta$), and if $U \in \mcl U^0$ let $g_U : \BB H \rta U$ be a conformal map which takes $\infty$ to $x_U = y_U$. Let $Y\subset [0,\infty)$ be a deterministic Borel set. Then a.s.\ 
\begin{align} \label{eqn-uniform-dim}
\dim_{\mcl H} g_U(a Y) = \Phi_\kappa\left( \dim_{\mcl H} Y \right) , \quad &\text{for each $ U\in \mcl U^+$ and Lebesgue a.e.\ $a > 0$}, \notag\\
\dim_{\mcl H} g_U(-a Y) = \Phi_\kappa\left( \dim_{\mcl H}Y  \right) , \quad &\text{for each $ U\in \mcl U^-$ and Lebesgue a.e.\ $a > 0$}, \notag \\ 
\dim_{\mcl H} g_U(a Y + b) = \Phi_\kappa\left( \dim_{\mcl H} Y \right) , \quad &\text{for each $ U\in \mcl U^0$ and Lebesgue a.e.\ $(a,b ) \in \BB R^2$} ,  
\end{align}
where $\Phi_\kappa$ is as in~\eqref{eqn-dim-function}. 
\end{thm}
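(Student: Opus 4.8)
The plan is to carry out the argument behind Theorem~\ref{thm-uniform-dim-time} (as outlined in Remark~\ref{remark-kpz}) one complementary component at a time: for each $U \in \mcl U$ I would express both $\dim_{\mcl H} g_U(aY)$ and $\dim_{\mcl H} Y$ in terms of the $\gamma$-quantum dimension of a scaled copy of $Y$ with respect to a boundary length measure, $\gamma = \sqrt\kappa \wedge (4/\sqrt\kappa)$, and then compose them, as in the identity $\Phi_\kappa(d) = 2\Psi_\gamma(\tfrac12 \Psi_\gamma^{-1}(d))$. The first reduction is to the case where $\eta$ is one of the ordinary $\SLE_\kappa$ processes of Theorem~\ref{thm-uniform-dim-time}: a chordal $\SLE_\kappa(\rho^L;\rho^R)$ (resp.\ whole-plane $\SLE_\kappa(\rho)$) is locally mutually absolutely continuous with respect to an ordinary chordal (resp.\ whole-plane or radial) $\SLE_\kappa$ away from its force points and endpoints, so by covering $\eta$ with countably many arcs and using the countable stability of Hausdorff dimension it suffices to treat the ordinary case in a reference domain such as $\BB H$ or $\BB D$. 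Note also that the conclusion can hold only for Lebesgue-a.e.\ $a$ (resp.\ $(a,b)$): the map $g_U$ depends on $\eta$, whereas the KPZ formulas we will invoke are a.s.\ statements about a single \emph{deterministic} set, so we cannot handle the uncountable family $\{g_U(aY)\}_a$ directly, and the restriction to a.e.\ scaling is precisely what will make a Fubini argument possible at the end.

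Next I would set up the Liouville quantum gravity coupling at a single complementary component. Sample $\eta$ together with an independent GFF-type field $h$ on $D$ whose boundary data is chosen so that the $\gamma$-quantum length of $\eta$ (when $\kappa \in (0,4)$), resp.\ the $\gamma$-quantum natural parameterization of $\eta$ (when $\kappa \in (4,8)$), is defined, as in~\cite{shef-zipper, wedges}. For $U \in \mcl U$ let $\wt h := h \circ g_U + Q \log|g_U'|$ be the image of $h|_U$ under the LQG coordinate change. Using the conformal welding / quantum zipper theory in the form which describes how an $\SLE_\kappa$-type curve cuts a quantum wedge (for chordal or radial $\eta$) or a quantum cone (for whole-plane $\eta$) into the quantum surfaces corresponding to its complementary components, I would argue that, conditionally on $\eta$, the field $\wt h$ is a variant of the GFF on $\BB H$ and that $g_U$ carries the boundary length measure $\nu_{\wt h}$, restricted to the portion of $\partial\BB H$ that $g_U$ maps onto the $\eta$-arc of $\partial U$, to the intrinsic $\gamma$-quantum measure of $\eta$ along that arc; this portion is $g_U([0,\infty))$ or $g_U((-\infty,0])$ for $U \in \mcl U^\pm$ and $g_U(\BB R)$ for $U \in \mcl U^0$, which is why the three cases of~\eqref{eqn-uniform-dim} arise. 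Consequently, for any set $A$ lying on that portion of $\partial\BB H$, the quantum dimension of $g_U(A)$ viewed as a subset of $\eta$ with its quantum parameterization equals the quantum dimension of $A$ with respect to $\nu_{\wt h}$.

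Now fix $U$ and a scaling $a > 0$ (and, for $U \in \mcl U^0$, a shift $b \in \BB R$), and let $A$ denote $aY$, $-aY$, or $aY+b$ as appropriate, so that $A$ lies on the portion of $\partial\BB H$ above. Conditionally on $\eta$ the set $g_U(A) \subset \eta$ is deterministic while $h$ is an independent GFF, so Theorem~\ref{thm-length-kpz} (for $\kappa \in (0,4)$) or Theorem~\ref{thm-length-kpz'} (for $\kappa \in (4,8)$) applies and expresses $\dim_{\mcl H} g_U(A)$ as the image of the quantum dimension of $g_U(A)$ under the relevant KPZ map. On the other hand $A$ is deterministic, hence independent of $\wt h$, so the boundary KPZ formula of~\cite[Theorem~4.1]{rhodes-vargas-log-kpz} expresses the quantum dimension of $A$ with respect to $\nu_{\wt h}$ in terms of $\dim_{\mcl H} A = \dim_{\mcl H} Y$. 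Composing these two relations via the identity of the previous step gives, for this fixed $U$, $a$, and $b$, that $\dim_{\mcl H} g_U(aY) = \Phi_\kappa(\dim_{\mcl H} Y)$ a.s. Finally, since this holds a.s.\ for each fixed scaling, Fubini in the scaling parameter(s) and in the randomness of $(\eta, h)$ shows that a.s.\ it holds for Lebesgue-a.e.\ $a$ (resp.\ $(a,b)$); intersecting over the countable set $\mcl U$ gives~\eqref{eqn-uniform-dim}.

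The main obstacle is the second step: making rigorous the claim that the GFF restricted to a complementary component $U$, transported to $\BB H$ via $g_U$, has the correct law and, crucially, that the $\eta$-arc of $\partial U$ carries the $\gamma$-quantum length (resp.\ quantum natural time) parameterization compatibly with the parameterization of the whole curve. For $\kappa \in (0,4)$ this is close to Sheffield's statement that the quantum zipper cuts a quantum wedge into two independent quantum wedges conformally welded along $\eta$; for $\kappa \in (4,8)$ one needs the mating-of-trees description of a chordal $\SLE_\kappa$ with $\kappa = 16/\gamma^2$ cutting a quantum wedge or cone into a countable collection of independent quantum wedges indexed by its complementary components, with $\eta$ traced at quantum natural time. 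One must also treat the boundary-touching components $\mcl U^\pm$ (where part of $\partial U$ lies on $\partial D$) separately from the bubbles $\mcl U^0$, and keep careful track of the independence of $h$ from $\eta$, which is exactly what makes $g_U(aY)$ independent of $h$ and $A$ independent of $\wt h$, so that the two KPZ formulas apply.
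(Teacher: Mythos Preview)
Your overall architecture---transport the field to $\BB H$ via $g_U$, apply the curve-side KPZ and the Rhodes--Vargas boundary KPZ, compose via $\Phi_\kappa = 2\Psi_\gamma(\tfrac12 \Psi_\gamma^{-1}(\cdot))$, then Fubini in the scaling parameter and take a countable union over $U$---is exactly the paper's approach. Two specific points, however, diverge from the paper, and one of them is a genuine gap.

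First, your opening ``reduction to ordinary $\SLE_\kappa$ by local absolute continuity'' does not work here and is in any case unnecessary. The components $U\in\mcl U$ and the maps $g_U$ are functionals of the \emph{entire} curve $\eta$, so local absolute continuity on arcs of $\eta$ does not reduce the statement to the $\rho=0$ case: changing $\eta$ on a short arc can change all the complementary components. The paper never reduces to ordinary $\SLE$; instead it couples $\eta$ directly with a quantum cone or wedge of the specific weight (depending on $\rho$, resp.\ $\rho^L,\rho^R$) for which the welding theorems of~\cite{wedges} (Theorems~1.9, 1.12, 1.17, 1.18 there) apply to that particular $\SLE_\kappa(\rho)$ or $\SLE_\kappa(\rho^L;\rho^R)$. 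That matching of weights is precisely what guarantees that $(\BB H,\wt h^a,0,\infty)$ is a quantum wedge (or a bead thereof), after which one passes to the circle-average embedding and uses absolute continuity with a free-boundary GFF to invoke the two KPZ formulas.

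Second, and more seriously, for $\kappa\in(4,8)$ you propose to apply Theorem~\ref{thm-length-kpz'} to $g_U(A)\subset\eta$ with $\eta$ parameterized by quantum natural time, and then identify the resulting ``quantum dimension'' with the $\nu_{\wt h}$-dimension of $A\subset\BB R$. These are not the same object: quantum natural time is defined through the Poisson point process of bubbles disconnected by $\eta$, not through the quantum length of any single boundary arc $\partial U\cap\eta$, and there is no statement in the paper that the natural-time preimage of a set on $\partial U$ has the same dimension as its $\nu_{\wt h}$-preimage. The paper sidesteps this entirely by invoking SLE duality: $\partial U\cap\eta$ is an $\SLE_{16/\kappa}$-type curve with $16/\kappa\in(2,4)$, and one applies Theorem~\ref{thm-length-kpz} (not Theorem~\ref{thm-length-kpz'}) to this boundary curve with $\gamma=4/\sqrt\kappa=\sqrt{16/\kappa}$. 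Then the quantum \emph{length} along $\partial U\cap\eta$ genuinely matches $\nu_{\wt h}$ under the LQG coordinate change, and the identity $\Phi_\kappa=\Phi_{16/\kappa}$ closes the argument. Your route via Theorem~\ref{thm-length-kpz'} would need an additional lemma relating natural-time dimension along $\eta$ to boundary-length dimension along $\partial U$ before it could be completed.
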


\begin{figure}[ht!]
	\begin{center}
		\includegraphics[scale=0.92]{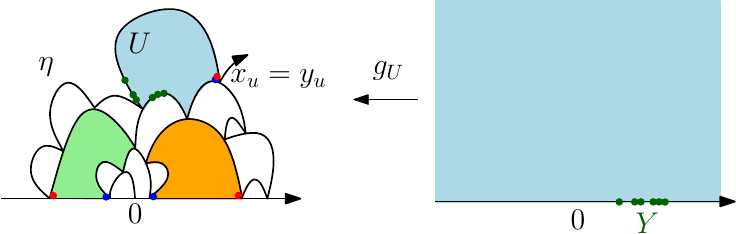}
	\end{center}
	\caption{\label{fig1} An illustration of the statement of Theorem~\ref{thm-uniform-dim} in the case of a chordal SLE$_\kappa$ $\eta$ and $\kappa\in(4,8)$. The theorem relates the Hausdorff dimension of the set $Y\subset[0,\infty)$ and the set $g_U(aY+b)$ for almost all $a,b>0$. The set $U$ on the left figure is contained in $\mcl U^0$, and the map $g_U:\BB H\to U$ is defined such that $g_U(\infty)=x_U=y_U$. The complementary connected component of $\eta$ shown in green (resp.\ orange) is contained in $\mcl U^+$ (resp.\ $\mcl U^-$). The blue (resp.\ red) dots represent the points $x_U$ (resp.\ $y_U$) for each of the three considered domains.} 
\end{figure}

In the setting of Theorem~\ref{thm-uniform-dim}, topological considerations imply that for $U\in\mcl U$, either the counterclockwise or clockwise arc of $\bdy U$ from $x_U$ to $y_U$ (or both) is traced by $\eta$, so $\mcl U = \mcl U^+ \cup \mcl U^- \cup \mcl U^0$. For $U\in \mcl U^+$ (resp.\ $U\in \mcl U^-$), the function $g_U$ maps $[0,\infty)$ (resp.\ $(-\infty , 0]$) to this counterclockwise (resp.\ clockwise) arc.  The set $\mcl U^0$ is empty if $\kappa \in (0,4)$ and for $\kappa > 4$ consists of ``bubbles" surrounded by either the left or right side of $\eta$ if $\kappa \in (4,8)$.

For $U\in \mcl U^+  $, the set of conformal maps $\BB H\rta U$ which take $0$ to $x_U$ and $\infty$ to $y_U$ is the same as the set of maps of the form $z\mapsto g_U(a z)$ for $a > 0$. Hence a.s.\ $\dim_{\mcl H} g (  Y) = \Phi_\kappa\left( \dim_{\mcl H} Y \right)$ for ``Lebesgue a.e." conformal map $g : \BB H \rta U$ taking $0$ to $x_U$ and $\infty$ to $y_U$. Similar statements hold for $\mcl U^-$ and $\mcl U^0$. We leave it as an open problem to determine whether this relation in fact holds a.s.\ for every such conformal map simultaneously (see Problem~\ref{item-all-map} in Section~\ref{sec-open-problems}). We note, however, that~\eqref{eqn-uniform-dim} (and the analogous relation in Theorem~\ref{thm-uniform-dim-time}) does not hold a.s.\ for all choices of $Y$ simultaneously. Indeed, taking $Y$ to be one of the multifractal spectrum sets $\wt\Theta^s \subset \BB R$ for $s \in [-1,1]$ studied in~\cite{gms-mf-spec} gives a counterexample.

\begin{remark}
Theorem~\ref{thm-uniform-dim-time} can be used to give another derivation of the double point dimension of SLE$_\kappa$ for $\kappa \in (4,8)$.  Indeed, the double points of such an SLE correspond to intersection points with the boundary which are subsequently mapped into the domain by the reverse Loewner flow.  The dimension of the intersection of an SLE$_\kappa$ for $\kappa \in (4,8)$ with the domain boundary was shown to be $2-8/\kappa$ in \cite{alberts-shef-bdy-dim} and $\Phi_\kappa(2-8/\kappa) = 2-(12-\kappa)(4+\kappa)/(8\kappa)$, which is the dimension of the double points of SLE$_\kappa$ \cite{miller-wu-dim}.

The cut point dimension of SLE$_\kappa$ for $\kappa \in (4,8)$ can similarly be derived using Theorem~\ref{thm-uniform-dim}.  Indeed, it is shown in \cite{ig1} that the conditional law of the left boundary of a chordal version of such an SLE given its right boundary is that of an SLE$_{16/\kappa}(16/\kappa-4;-8/\kappa)$ process.  The dimension of the intersection of an SLE$_{16/\kappa}(16/\kappa-4;-8/\kappa)$ with $[0,\infty)$ was shown in \cite{miller-wu-dim} to be $5-8/\kappa + \kappa/2$ and $\Phi_\kappa(5-8/\kappa + \kappa/2) = 3-3\kappa/8$, which is the dimension of the cut points of an SLE$_\kappa$ derived in \cite{miller-wu-dim}.
\end{remark}

\subsection{SLE/LQG background} 
\label{sec-prelim}

In this subsection we briefly review some facts about SLE and LQG which will be needed for the proofs of our main results. We refer to the cited papers for more details. See also~\cite[Sections~1.2 and~1.4]{ghm-kpz} for a more detailed overview.

We first recall the definition of chordal and radial SLE$_\kappa(\ul\rho)$ for $\kappa >0$ and a finite vector of weights $\ul\rho = (\rho_1 , \dots, \rho_n)$ and force points $x_1 , \dots , x_n$ in the closure of the domain. Such processes were first introduced in~\cite[Section~8.3]{lsw-restriction}.  See also \cite{sw-coord} and~\cite[Section~2.2]{ig1}.  As defined in \cite[Section~2.2]{ig1}, the \emph{continuation threshold} for an SLE$_\kappa(\ul\rho)$ is the first time that the sum of the weights of the force points which have been disconnected from the target point by the curve is $\leq -2$. This time was defined in~\cite[Section~2.2]{ig1}, and is the largest time up to which SLE$_\kappa(\ul\rho)$ is defined as a continuous curve. Note that the continuation threshold may be infinite. We also recall the definition of whole-plane SLE$_\kappa(\rho)$ for $\rho  > -2$~\cite[Section~2.1]{ig4}.  

Now fix $\gamma \in (0,2)$. A \emph{Liouville quantum gravity surface} is an equivalence class of pairs $(D , h )$ consisting of a domain $D\subset \BB C$ and a distribution $h$ on $D$, with two such pairs $(D, h)$ and $(\wt D , \wt h)$ declared to be equivalent (i.e.\ different parameterizations of the same surface) if there is a conformal map $\phi : \wt D \rta D$ such that
\eqb \label{eqn-lqg-coord}
\wt h   = h \circ \phi  +Q \log |\phi'| ,\quad \text{for} \quad Q = \frac{2}{\gamma} + \frac{\gamma}{2}   .
\eqe 
We refer to the distribution $h$ as an \emph{embedding} of the quantum surface.

The quantum area and length measures $\mu_h$ and $\nu_h$ of~\cite{shef-kpz} are preserved under transformations of the form~\eqref{eqn-lqg-coord}, in the sense that it is a.s.\ the case that for each Borel set $A\subset \wt D$, we have $\mu_h(\phi(A)) = \mu_{\wt h}(A)$ and similarly for $\nu_h$; see~\cite[Proposition~2.1]{shef-kpz}.  Hence these measures are well-defined on the LQG surface $(D, h)$. One can also define quantum surfaces with $k\in\BB N$ marked points in $D\cup\bdy D$ by requiring that the conformal map $\phi$ takes the marked points for one surface to those for the other. 

For $\alpha < Q$, an \emph{$\alpha$-quantum cone} is an infinite-volume doubly-marked quantum surface parameterized by $\BB C$ which describes the local behavior of $\wt h- \alpha \log |\cdot|$ near 0, where $\wt h$ is a whole-plane GFF (see~\cite[Section~4.2]{wedges} for a precise definition).  Similarly, an $\alpha$-quantum wedge is an infinite-volume quantum surface parameterized by $\BB H$, which describes the behavior of $\wt h - \alpha \log |\cdot|$ near 0, for $\wt h$ a free-boundary GFF on $\BB H$ (see~\cite[Section~1.6]{shef-zipper} or~\cite[Section~4.2]{wedges}). As explained in~\cite[Section~4.4]{wedges}, one can also define an $\alpha$-quantum wedge for $\alpha\in (Q,Q+\gamma/2)$. In this case, the wedge is not parameterized by $\BB H$ and instead consists of an infinite ordered sequence of finite-volume ``beads", each of which has the topology of the disk and has finite quantum area and boundary length. 

Quantum disks are finite-volume quantum surfaces parameterized by the unit disk $\BB D$ (or equivalently any simply connected domain in $\BB C$), which are most often taken to have one or two marked boundary points (which are sampled uniformly from the quantum boundary measure). One can consider quantum disks with specified area, boundary length, or both. Quantum spheres are finite-volume quantum surfaces parameterized by the Riemann sphere, often taken to have fixed area and sometimes taken to have one, two, or three marked points. See~\cite[Section~4.5]{wedges}.  
 
The quantum surfaces introduced above can be embedded in various ways into $\BB C$ (in the case of a cone or a sphere) or $\BB H$ (in the case of a thick wedge, a bead of a thin wedge, or a disk). The \emph{circle average embedding}, which we will define just below, is a particularly convenient choice of embedding for the quantum surfaces considered above, since with this choice of embedding, the law of the field is absolutely continuous with respect to the law of a free-boundary or whole-plane GFF with a particular choice of additive constant on any domain which is bounded away from the origin, infinity, and $\partial\BB D$. For any $r>0$ and any field $h$ on $\BB C$ (resp.\ $\BB H$) we let $h_r(0)$ be the average of $h$ around $\bdy B_r(0)$ (resp.\ $\bdy B_r(0)\cap\BB H$)~\cite[Section 3]{shef-kpz}. 	
 \begin{defn}
 	The \emph{circle average embedding} of the quantum surfaces introduced above is defined as follows.
 	\begin{itemize}
 		\item For a quantum cone (resp.\ thick wedge) the circle average embedding is the distribution $h$ on $\BB C$ (resp.\ $\BB H$) such that 1 is the largest $r > 0$ for which $h_r(0)+Q\log(r)\leq 0$. 
 		\item For a quantum sphere (resp.\ quantum disk or bead of a thin quantum wedge) the circle average embedding is the distribution $h$ on $\BB C$ (resp.\ $\BB H$) such that the function $r\mapsto h_r(0)+Q\log(r)$ attains its maximum at $r=1$.
 	\end{itemize}
 	\label{def:circleaverage}
 \end{defn}	
 The circle average embedding was the embedding used when defining the mentioned quantum surfaces in \cite{wedges}.

In this paper our main interest in the above quantum surfaces stems from their relationship with SLE$_\kappa$. If one cuts an $\alpha$-quantum cone by an independent whole-plane SLE$_\kappa(\rho)$ curve for $\kappa = \gamma^2 \in (0,4)$ and appropriate $\rho > -2$ depending on $\alpha$, then one obtains an $\alpha'$-quantum wedge for a certain value of $\alpha'$ depending on $\alpha$~\cite[Theorem~1.5]{wedges}. Similarly, one can cut an $\alpha$-quantum wedge by an independent chordal SLE$_\kappa(\rho^L ; \rho^R)$ curve (with force points immediately to the left side and the right side, respectively, of the starting point of the curve) to get a pair of independent quantum wedges~\cite[Theorem~1.2]{wedges} for certain $\rho^L,\rho^R>-2$. In the case when $\alpha\in (Q,Q+\gamma/2)$, one replaces the SLE$_\kappa(\rho^L ; \rho^R)$ curve with a concatenation of such curves, one in each bead.

If $\gamma \in (\sqrt 2 , 2)$ and one cuts an $\alpha$-quantum cone by an independent SLE$_\kappa(\rho)$ curve for $\kappa= 16/\gamma^2 \in (4,8)$ and appropriate $\rho > -2$, one gets three independent beaded quantum surfaces corresponding to the complementary connected components of the curve whose boundaries are traced by the left side of the curve, the right side of the curve, and both sides of the curve, respectively. One of these surfaces is an $\alpha'$-quantum wedge for $\alpha' > Q$ and the other two are L\'evy trees of quantum disks~\cite[Theorem~1.17]{wedges}. A similar statement holds for an $\alpha$-quantum wedge cut by an independent SLE$_\kappa(\rho^L ;\rho^R)$ for $\kappa = 16/\gamma^2$ and appropriate $\rho^L,\rho^R>-2$~\cite[Theorem~1.16]{wedges}. 

For $\kappa >4$ whole-plane space-filling SLE$_{\kappa }$ from $\infty$ to $\infty$ is a variant of SLE$_{\kappa }$ which fills all of $\BB C$, introduced in~\cite[Sections~4.3 and~1.2.3]{ig4} (see~\cite[Section~1.4.1]{wedges} for the whole-plane case). In the case when $\kappa  \geq 8$, so SLE$_{\kappa }$ is already space-filling, whole-plane space-filling SLE$_{\kappa}$ is a two-sided version of chordal SLE$_{\kappa}$. In the case when $\kappa \in (4,8)$, whole-plane space-filling SLE$_{\kappa}$ is obtained by iteratively filling in each of the bubbles disconnected from $\infty$ by a two-sided variant of chordal SLE$_{\kappa}$ with a space-filling SLE$_{\kappa}$ loop (so in particular cannot by described by the Loewner equation). It is immediate from the construction that the marginal law of the left (resp.\ right) boundary of $\eta'$ stopped upon hitting a fixed point $z\in\BB C$ is that of a whole-plane SLE$_{16/\kappa}(2-16/\kappa)$ from $z$ to $\infty$ (c.f.~\cite[Theorem~1.1]{ig4}). 

Suppose $(\BB C , h , 0,  \infty)$ is a $\gamma$-quantum cone, $\gamma \in (0,2)$, and $\eta'$ is an independent whole-plane space-filling SLE$_{\kappa}$, parameterized by quantum mass with respect to $h$ (so that $\mu_h(\eta'([s,t])) = t-s$ for $s <t$).  For $t \in \BB R$, let $L_t$ (resp.\ $R_t$) be the change in the quantum length of the left (resp.\ right) outer boundary of $\eta'((-\infty, t])$ relative to time $0$. Then $Z_t = (L_t , R_t)$ is a correlated two-dimensional Brownian motion with correlation $-\cos(4\pi/\kappa)$~\cite[Theorem~1.9]{wedges}.  (The formula $-\cos(4\pi / \kappa)$ for the correlation of the Brownian motion was only proved for $\kappa \in (4,8]$ in \cite{wedges}.  That the same formula holds for $\kappa > 8$ was established in \cite{kappa8-cov}.)  Furthermore, $Z$ a.s.\ determines $(h ,\eta')$ modulo rotation~\cite[Theorem~1.11]{wedges}. Many quantities associated with $\eta'$ can be described explicitly in terms of $Z$. For example, the curve $\eta'$ hits the left (resp.\ right) outer boundary of $\eta'((-\infty ,0])$ precisely when $L$ (resp.\ $R$) hits a running infimum relative to time 0. See~\cite{ghm-kpz} for further examples. The above facts are collectively referred to as the \emph{peanosphere description} of $(h , \eta')$.

\section{Proofs}
\label{sec-proof}

\subsection{KPZ formula for quantum lengths along an SLE$_\kappa$ curve for $\kappa \in (0,4)$}
\label{sec-length-kpz}
 
In this subsection we will prove a KPZ formula for the quantum length measure along an SLE$_\kappa$ curve for $\kappa \in (0,4)$, which is needed for our proofs of Theorems~\ref{thm-uniform-dim-time} and~\ref{thm-uniform-dim}. We state the theorem at a high level of generality: we allow for chordal, radial, and whole-plane SLE$_\kappa$ (possibly with force points); and quantum lengths measured with respect to a free- or zero-boundary GFF, as well as with respect to the distributions which parameterize the various quantum surfaces defined in~\cite[Section 4]{wedges} (which are variants of the GFF). See Section~\ref{sec-prelim} for a brief review of the objects involved in the theorem statement; see in particular Definition~\ref{def:circleaverage} for the definition of the circle average embedding.

\begin{thm}  \label{thm-length-kpz}
Let $\kappa\in (0,4)$, let $\ul\rho$ be a finite vector of weights, and let $\eta$ be a chordal or radial SLE$_\kappa(\ul\rho)$ in a simply connected domain $D\subsetneq \BB C$, with some choice of starting point, target point and force points, stopped when it hits the continuation threshold  (which may be infinite). Let $h$ be a free-boundary GFF on $D$ (with the additive constant fixed in some arbitrary way) independent from $\eta$ and let $\nu_h$ be the $\gamma$-quantum length measure induced by $h$, $\gamma = \sqrt\kappa$. Suppose that $\eta$ is parameterized by $\nu_h$-length, so that $\nu_h(\eta([0,t])) = t$ for each $t > 0$. Let $X$ be a random subset of $\eta\setminus \bdy D$ which is independent from $h$. Then almost surely,
\eqb \label{eqn-length-kpz}
\dim_{\mcl H} X  
= \left( 1 + \frac{\gamma^2}{4} \right)\dim_{\mcl H} \eta^{-1}(X) - \frac{\gamma^2}{8} (\dim_{\mcl H} \eta^{-1}(X) )^2  .
\eqe   
The same holds if we make one or both of the following changes (where $Q = 2/\gamma + \gamma/2$ is as in~\eqref{eqn-lqg-coord}). 
\begin{itemize}
\item Replace $\eta$ by a whole-plane SLE$_\kappa(\rho)$ for $\rho > -2$ and replace $h$ by a whole-plane GFF. 
\item In the chordal or radial case, replace $h$ by a zero-boundary GFF on $D$, or for $D=\BB H$ replace $h$ by the circle average embedding into $\BB H$ of a quantum disk (with fixed area, boundary length, or both), an $\alpha$-quantum wedge for $\alpha \leq Q $, or a single bead of an $\alpha$-quantum wedge for $\alpha\in(Q,Q+\gamma/2)$. In the whole-plane case, replace $h$ by the \emph{circle average embedding into $\BB C$ of} a quantum sphere, or an $\alpha$-quantum cone for $\alpha < Q $. 
\end{itemize}
\end{thm}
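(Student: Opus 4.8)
The plan is to reduce, using the locality of Hausdorff dimension together with absolute continuity, to a single canonical coupling of the curve with the field, and then to combine the KPZ formula of~\cite[Theorem~1.1]{ghm-kpz} with a dimension‑doubling fact for Brownian local time.

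\textbf{Step 1 (reduction to a canonical coupling).} Hausdorff dimension is countably stable and is determined by the restriction of the data to neighbourhoods of points, so neither $\dim_{\mcl H} X$ nor $\dim_{\mcl H}\eta^{-1}(X)$ changes if we replace $(\eta,h)$ by another pair which is mutually absolutely continuous with it on neighbourhoods of points away from $\bdy D$, the force points, and (in the quantum‑surface cases) the marked points. All the laws of $h$ allowed in the theorem are absolutely continuous with respect to a whole‑plane GFF on compact subsets of $\C$ away from the relevant marked points, and any SLE$_\kappa(\ul\rho)$ (chordal, radial, or whole‑plane) is absolutely continuous with respect to an ordinary whole‑plane SLE$_\kappa$ away from the boundary and the force points; since $X\perp h$ and $\eta\perp h$, the joint laws compare as products. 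Hence it suffices to prove~\eqref{eqn-length-kpz} when $\eta$ and $h$ are coupled as follows. Let $(\C,h,0,\infty)$ be a $\gamma$‑quantum cone, let $\eta'$ be an independent whole‑plane space‑filling SLE$_{16/\kappa}$ (note $16/\kappa=16/\gamma^2$) from $\infty$ to $\infty$ parameterized by $\mu_h$‑mass, and let $Z=(L,R)$ be the associated peanosphere Brownian motion (Section~\ref{sec-prelim}). Take $\eta$ to be the left outer boundary of $\eta'((-\infty,0])$, which is a whole‑plane SLE$_\kappa(2-\kappa)$ from $\eta'(0)$ to $\infty$ (well defined since $2-\kappa>-2$); by the peanosphere construction the restriction of $\nu_h$ to $\eta$ is the left boundary length process, so the point of $\eta$ at $\nu_h$‑length $t$ from $\eta'(0)$ is $\eta'(u)$ with $u\le 0$ characterized by $-\inf_{s\in[u,0]}L_s=t$. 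Write $A:=\{u\le 0: L_u=\inf_{s\in[u,0]}L_s\}$ for the set of times at which $\eta'$ traces $\eta$.

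\textbf{Step 2 (the two inputs and the combination).} Since $X\subset\eta\subset\C$ is independent of $h$, the KPZ formula~\cite[Theorem~1.1]{ghm-kpz}—which concerns exactly a $\gamma$‑quantum cone decorated by an independent space‑filling SLE$_{16/\gamma^2}$ parameterized by quantum mass—applies to $X$ and, written in terms of the cone parameter $\gamma$, reads
\[
\dim_{\mcl H} X = 2\,\Psi_\gamma\bigl(\dim_{\mcl H}(\eta')^{-1}(X)\bigr),
\]
with $\Psi_\gamma$ as in~\eqref{eqn-kpz-function}. On the other hand, with $\eta$ parameterized by $\nu_h$‑length we have $\eta^{-1}(X)=\{-L_u: u\in A\cap(\eta')^{-1}(X)\}$, i.e.\ $\eta^{-1}(X)$ is the image of $A\cap(\eta')^{-1}(X)$ under the running‑infimum map $u\mapsto-\inf_{s\in[u,0]}L_s$. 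Reversing time turns $A$ into the zero set of a reflected Brownian motion and turns this running‑infimum map into (a constant multiple of) its local time at $0$; equivalently, the inverse of this map is a stable subordinator of index $1/2$. The dimension‑transformation rule for such maps—a set of dimension $d$ in the local‑time coordinate corresponds to a set of dimension $d/2$ in the time coordinate—gives
\[
\dim_{\mcl H}\bigl((\eta')^{-1}(X)\cap A\bigr)=\tfrac12\,\dim_{\mcl H}\eta^{-1}(X),
\]
and one checks that the remaining times in $(\eta')^{-1}(X)\setminus A$, coming from the space‑filling curve revisiting points of $\eta$, lie in a set of strictly smaller dimension and so do not affect $\dim_{\mcl H}(\eta')^{-1}(X)$. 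Substituting the second display into the first and simplifying via the identity $2\Psi_\gamma(d/2)=\bigl(1+\tfrac{\gamma^2}{4}\bigr)d-\tfrac{\gamma^2}{8}d^2$ yields~\eqref{eqn-length-kpz} with $\gamma=\sqrt\kappa$.

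\textbf{Main obstacle.} The genuine difficulty is the second display. The classical dimension‑transformation results for stable subordinators (Hawkes‑type theorems) are formulated for sets \emph{independent} of the subordinator, whereas $A\cap(\eta')^{-1}(X)$ is a subset of the zero set of $L$ that is \emph{not} independent of $L$: the set $X$ lives on $\eta$, and $\eta$ together with $h$ determines $L$. One therefore needs a conditioning/resampling argument to decouple $X$ from the local‑time structure—this is the same obstruction that must be handled in~\cite{ghm-kpz}, and the plan is to import or adapt that argument. The upper bound $\dim_{\mcl H}\eta^{-1}(X)\le 2\dim_{\mcl H}\bigl((\eta')^{-1}(X)\cap A\bigr)$ is the easy half, following deterministically from the a.s.\ Hölder continuity of Brownian local time with any exponent $<1/2$; the matching lower bound is where the dependence must be confronted. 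The reductions of Step~1 and the bookkeeping for multiple visits of the space‑filling curve are routine by comparison.
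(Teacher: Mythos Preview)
Your overall architecture matches the paper's proof exactly: reduce by absolute continuity to the peanosphere coupling of a $\gamma$-quantum cone with space-filling SLE$_{16/\kappa}$, realize $\eta$ as an outer boundary of $\eta'((-\infty,0])$, relate $(\eta')^{-1}(X)$ to $\eta^{-1}(X)$ via an index-$1/2$ stable subordinator, and then apply \cite[Theorem~1.1]{ghm-kpz}. So Step~1 and the first half of Step~2 are on target.

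However, the ``Main obstacle'' you describe is a phantom. The paper invokes the \emph{uniform} dimension theorem for stable subordinators, namely \cite[Theorem~4.1]{hawkes-uniform}, which asserts that for an $\alpha$-stable subordinator $S$ one has almost surely
\[
\dim_{\mcl H} S(A)=\alpha\,\dim_{\mcl H} A\quad\text{for \emph{every} Borel set }A\subset[0,\infty)\text{ simultaneously}.
\]
This is the subordinator analogue of Kaufman's uniform dimension-doubling theorem for Brownian motion; no independence between $A$ and $S$ is required. In the paper's setup one takes $S_r=\inf\{t\ge 0: R_t=-r\}$, so that $\eta=\eta'\circ S$, and then applies the uniform Hawkes theorem directly to $A=\eta^{-1}(X)$. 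There is no need to decouple $X$ from the local-time structure, no resampling argument, and nothing to import from \cite{ghm-kpz} beyond its KPZ statement. Your proposed division into an ``easy H\"older upper bound'' and a ``hard dependent lower bound'' is therefore unnecessary.

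Two smaller points. First, the paper works with the \emph{right} outer boundary and the forward-time subordinator $S$, which gives $\eta'(S_r)=\eta(r)$ directly and avoids your bookkeeping about $(\eta')^{-1}(X)\setminus A$ and ``revisits''; your backward-time local-time formulation is equivalent but less clean. Second, your Step~1 compresses too much: a chordal SLE$_\kappa(\ul\rho)$ in $D$ is not literally absolutely continuous with respect to a whole-plane SLE$_\kappa$. The paper instead passes through radial SLE$_\kappa(2-\kappa)$ via the domain Markov property of whole-plane SLE$_\kappa(2-\kappa)$, then compares general SLE$_\kappa(\ul\rho)$ to radial SLE$_\kappa(2-\kappa)$ on subdomains away from the force points using the explicit form of the driving SDE, and finally exhausts the curve by a countable family of stopping-time intervals. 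This chain of reductions is routine but does need to be spelled out.
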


Theorem~\ref{thm-length-kpz} will be proven using the KPZ-type formula~\cite[Theorem~1.1]{ghm-kpz} together with~\cite[Theorem~1.9]{wedges} and 
a version of Kaufman's theorem for subordinators~\cite[Theorem~4.1]{hawkes-uniform}. We remark that an analogue of Theorem~\ref{thm-length-kpz} when $\eta$ is a flow line of $h$ (in the sense of~\cite{ig1,ig2,ig3,ig4}) instead of an SLE curve independent from $h$ is proven in~\cite{aru-kpz}.

\begin{remark}
The right side of~\eqref{eqn-length-kpz} is equal to $2 \Psi_\gamma\left( \frac12 \dim_{\mcl H} \eta^{-1}(X) \right)$, with $\Psi_\gamma$ as in~\eqref{eqn-kpz-function}.  
\end{remark}

We will first prove Theorem~\ref{thm-length-kpz} in the special case of whole-plane SLE$_\kappa(2-\kappa)$ on an independent $\gamma$-quantum cone. This case is particularly convenient because it exactly fits into the framework of the peanosphere construction (Section~\ref{sec-prelim}), which is also the setting of~\cite[Theorem~1.1]{ghm-kpz}. 

\begin{lem} \label{prop-length-kpz0}
Let $\kappa \in (0,4)$ and let $\eta$ be a whole-plane SLE$_\kappa(2-\kappa)$ from $0$ to $\infty$. Let $(\BB C ,h , 0 ,\infty)$ be a $\gamma$-quantum cone independent from $\eta$ with the circle average embedding and let $\nu_h$ be its $\gamma$-quantum length measure, $\gamma = \sqrt\kappa$. Suppose that $\eta$ is parameterized by $\nu_h$-length, so that $\nu_h(\eta([0,t])) = t$ for each $t > 0$. If $X\subset \eta$ is a set which is independent from $h$, then $\dim_{\mcl H} X$ and $\dim_{\mcl H}\eta^{-1}(X)$ are a.s.\ related by the formula~\eqref{eqn-length-kpz}. 
\end{lem}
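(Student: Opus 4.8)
The plan is to reduce Lemma~\ref{prop-length-kpz0} to the KPZ formula of~\cite[Theorem~1.1]{ghm-kpz}, which is stated in terms of the peanosphere Brownian motion $Z = (L,R)$ associated with a space-filling SLE$_{\kappa'}$ on a $\gamma$-quantum cone. The key observation is that the left (or right) outer boundary of the space-filling curve $\eta'$ with $\kappa' = 16/\kappa \in (4,8)$ stopped at a time $z$ is a whole-plane SLE$_\kappa(2-\kappa)$ from $z$ to $\infty$ (as recalled in Section~\ref{sec-prelim}), and that the $\nu_h$-length of this boundary curve is encoded by a running-infimum process of $L$ (resp.\ $R$). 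So I would first set up the coupling: realize $\eta$ as the left outer boundary of $\eta'((-\infty,0])$ where $(\BB C, h, 0, \infty)$ is the $\gamma$-quantum cone and $\eta'$ is the independent space-filling SLE$_{\kappa'}$, parameterized by $\mu_h$-mass. By~\cite[Theorem~1.13]{wedges}, $L$ is (a constant times) a linear Brownian motion, and the curve $\eta$ traced by quantum length is precisely the image under $\eta'$ of the set of times at which $-L$ achieves a record minimum; more precisely, the quantum length measure $\nu_h$ restricted to $\eta$ pushes forward under $\eta'^{-1}$ to the local-time measure of $L$ at its running infimum, which in turn is the range of a stable-$1/2$ subordinator $\sigma$ (the inverse local time).

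With this picture in place, a set $X \subset \eta$ corresponds, via the bijection between $\eta$ and the record-minimum times of $L$, to a subset of $[0,\infty)$ in ``Brownian-motion time'', and via the further bijection with the range of $\sigma$, to a subset of the subordinator's time axis (the ``quantum length'' parameterization). The dimension $\dim_{\mcl H}\eta^{-1}(X)$ is then the dimension of the corresponding subset $A$ of the domain of $\sigma$, while $\dim_{\mcl H} X$ is a Euclidean dimension along $\eta$. The second step is to relate $\dim_{\mcl H} X$ to the dimension of the corresponding set of record-minimum times of $L$ (call it $\sigma(A)$): here I would invoke~\cite[Theorem~1.1]{ghm-kpz}, which is exactly a KPZ formula relating the Euclidean dimension of a subset of a space-filling SLE$_{\kappa'}$ to its dimension in the peanosphere time parameterization — applied to (a one-sided boundary version of) that statement, this expresses $\dim_{\mcl H} X$ in terms of $\dim_{\mcl H}\sigma(A)$. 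The third step is purely one-dimensional: $\sigma$ is a stable-$1/2$ subordinator, and by the uniform-dimension result for subordinators~\cite[Theorem~4.1]{hawkes-uniform} (a Kaufman-type theorem), almost surely and simultaneously for all Borel sets $A$, one has $\dim_{\mcl H}\sigma(A) = \tfrac12 \dim_{\mcl H} A$. Chaining these together, $\dim_{\mcl H}\eta^{-1}(X) = \dim_{\mcl H} A$ and $\dim_{\mcl H} X$ becomes a function of $\tfrac12 \dim_{\mcl H}\eta^{-1}(X)$; a direct computation should show this function is precisely $2\Psi_\gamma(\tfrac12 \dim_{\mcl H}\eta^{-1}(X))$, i.e.\ the right-hand side of~\eqref{eqn-length-kpz}.

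I expect the main obstacle to be the bookkeeping of the three successive changes of parameterization and making sure the independence hypothesis ($X$ independent from $h$) is preserved and correctly interpreted at each stage — in particular, that a set independent from $h$ corresponds, after transport to the subordinator's time axis, to a set to which~\cite[Theorem~4.1]{hawkes-uniform} applies (this is why the uniform version of Kaufman's theorem, valid simultaneously over all Borel sets, is needed rather than a fixed-set version). A secondary subtlety is that~\cite[Theorem~1.1]{ghm-kpz} is phrased for subsets of the whole space-filling curve $\eta'$ with its quantum-mass parameterization, whereas here we need the statement for a subset of the \emph{boundary} curve $\eta$ with the quantum-length parameterization; I would need to check that the relevant special case of~\cite{ghm-kpz} (or a minor variant of its proof, using that the left boundary length is recorded by the running infimum of the Brownian motion $L$ exactly as in the examples listed in Section~\ref{sec-prelim}) indeed gives the KPZ relation between $\dim_{\mcl H} X$ and $\dim_{\mcl H}\sigma(A)$ with the correct exponents. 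Once these identifications are nailed down, the remaining algebra — verifying that composing the ``square-root'' from the subordinator with the KPZ exponent from~\cite{ghm-kpz} yields exactly $\Psi_\gamma$ — is routine.
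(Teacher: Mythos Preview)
Your proposal is correct and follows essentially the same route as the paper: couple $\eta$ as the outer boundary of a space-filling $\SLE_{16/\kappa}$ curve $\eta'$ on the $\gamma$-quantum cone, identify the quantum-length parameterization of $\eta$ with the first-passage-time (equivalently, inverse-local-time) process of one coordinate of the peanosphere Brownian motion, apply Hawkes' uniform dimension theorem for the stable-$1/2$ subordinator, and finish with~\cite[Theorem~1.1]{ghm-kpz}. Your worry about needing a ``boundary version'' of~\cite[Theorem~1.1]{ghm-kpz} is unnecessary: since $\eta$ is contained in the range of $\eta'$, any $X\subset\eta$ independent of $h$ is already a subset of $\eta'$ to which that theorem applies verbatim, and $\sigma(A)$ is literally $(\eta')^{-1}(X)$.
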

\begin{proof} 
Let $\eta'$ be a whole-plane space-filling SLE$_{16/\kappa}$ from $\infty$ to $\infty$, independent from $h$ and parameterized by quantum mass with respect to $h$ in such a way that $\eta'(0) = 0$. By the construction in~\cite[Section 1.4.1]{wedges}, the right outer boundary of $\eta'((-\infty , 0])$ is the flow line from $0$ to $\infty$ of a certain whole-plane GFF, so by~\cite[Theorem~1.1]{ig4} it has the law of a whole-plane SLE$_\kappa(2-\kappa)$ curve. Therefore we can couple $\eta$ with $(\eta' , h)$ in such a way that $\eta$ is a.s.\ equal to this right outer boundary.  In this coupling $\eta$ is determined by~$\eta'$ (viewed modulo parameterization) and hence is independent from $h$. We assume that $\eta$ is parameterized by $\gamma$-quantum length with respect to $h$.  

For $t \geq 0$, let $R_t$ be the change in the right boundary length of $\eta'$ between time $0$ and time $t$, as in~\cite[Theorem~1.9]{wedges}. That theorem tells us that $R$ has the law of a deterministic constant multiple of a standard linear Brownian motion. For $r \geq 0$, let
\eqbn
S_r := \inf\left\{t \geq 0 \,:\, R_t =-r\right\} .
\eqen
Equivalently, $S_r$ is the first time that $\eta'$ covers up $r$ units of quantum length along its right boundary, or, a.s.\ for each fixed $r$, the infimum of the times for which $\eta(r)$ is no longer on the left right boundary of $\eta'$. In particular, since $\eta$ is parameterized by quantum length we have $\eta'(S_r) = \eta(r)$. 

The process $S$ has the law of a stable subordinator of index $1/2$ (see e.g., \cite[Section~2.2]{bertoin-sub}). By~\cite[Theorem~4.1]{hawkes-uniform}, we a.s.\ have
\eqbn
\dim_{\mcl H} S(A)   = \frac12 \dim_{\mcl H} A ,\quad \text{for every Borel set $A\subset [0,\infty)$ simultaneously.}
\eqen
In particular, if $X\subset \eta$ is chosen in a manner which is independent from $h$, then a.s.\ 
\eqbn
\dim_{\mcl H} (\eta')^{-1}(X) =     \frac12  \dim_{\mcl H} (\eta' \circ S)^{-1} (X) = \frac12  \dim_{\mcl H} \eta^{-1} (X)  .
\eqen
By~\cite[Theorem~1.1]{ghm-kpz}, we a.s.\ have
\eqbn
\dim_{\mcl H} X = \left(2 + \frac{\gamma^2}{2} \right) \dim_{\mcl H} (\eta')^{-1}(X) - \frac{\gamma^2}{2} (\dim_{\mcl H} (\eta')^{-1}(X))^2 .
\eqen
Combining these relations yields the statement of the lemma.  
\end{proof}

Now we will deduce the general case of Theorem~\ref{thm-length-kpz} from Lemma~\ref{prop-length-kpz0} and various elementary Markov property and absolute continuity arguments. 

\begin{proof}[Proof of Theorem~\ref{thm-length-kpz}]
In the cases of the whole-plane GFF and the free-boundary GFF we may fix the additive constant arbitrarily since changing the additive constant corresponds to multiplying all quantum lengths by a constant, hence $\dim_{\mcl H} \eta^{-1}(X)$ is left unchanged. By stability of Hausdorff dimensions under countable unions and by absolute continuity of the fields in domains bounded away from zero, infinity and $\partial \BB D$, the cases of the quantum cone, the quantum wedge, the quantum sphere and the quantum disk reduce to the case of the GFF in $\BB C$ or $\BB H$.	By conformal invariance and the LQG coordinate change formula (see~\eqref{eqn-lqg-coord}) we only need to prove the statement for each chordal or radial SLE$_\kappa(\ul\rho)$ process in a single choice of domain~$D$.

First consider the case where $\eta$ is a radial SLE$_{\kappa}(2-\kappa)$ from $1$ to $\infty$ in $\BB C\setminus \ol{\BB D}$, with force point located at $z \in \bdy\BB D\setminus \{1\}$. Let $\wt\eta$ be a whole-plane SLE$_{\kappa}(2-\kappa)$ from $0$ to $\infty$ parameterized by capacity seen from $\infty$ and let $\tau$ the the smallest $t \geq 0$ for which the centered Loewner map $f_t$ from the unbounded component of $\BB C\setminus \wt\eta((-\infty , t])$ to $\BB C\setminus \ol{\BB D}$ maps the force point of $\wt\eta$ to $z$. Note that it follows from scale invariance and the domain Markov property that $\tau < \infty$ a.s. By the domain Markov property we can couple $\eta$ with $\wt\eta$ in such a way that $\eta = f_\tau(\wt\eta|_{[\tau,\infty)})$ a.s.  

Let $\wt h$ be a whole-plane GFF independent from $\wt\eta$ and let $h' := \wt h \circ f_\tau^{-1}  + Q\log |(f_\tau^{-1})'|$. By the Markov property and conformal invariance of the GFF, the conditional law of $h'$ given $\wt\eta([-\infty,\tau])$ and $\wt h|_{\wt\eta([-\infty,\tau])}$ is that of a zero-boundary GFF $h$ on $\BB C\setminus \ol{\BB D}$ plus a function which is harmonic on $\BB C\setminus \ol{\BB D}$. If $X \subset \eta$ is determined by~$\eta$, viewed modulo parameterization, then $f_t^{-1}(X)$ is a subset of $\wt\eta$ which is independent from $\wt h$. By Lemma~\ref{prop-length-kpz0} and local absolute continuity, the formula~\eqref{eqn-length-kpz} holds a.s.\ with $f_t^{-1}(X)$ in place of $X$, $\wt\eta$ in place of $\eta$, and $\wt h$ in place of $h$. By the LQG coordinate change formula we can apply the map $f_\tau$ to obtain~\eqref{eqn-length-kpz} with $h'$ in place of $h$. Since $\nu_h$ and $\nu_{h'}$ differ by multiplication by a smooth function, we obtain~\eqref{eqn-length-kpz} for $X$, $\eta$, and $h$. 

Now suppose that $\eta$ is a chordal or radial SLE$_\kappa(\ul\rho)$ in $\BB D$ started from 1, with arbitrary choice of target point, weights, and force points located at positive distance from $1$, stopped when it hits the continuation threshold. By the Schramm-Wilson coordinate change formula~\cite[Theorem~3]{sw-coord} we immediately reduce to the case of radial SLE$_\kappa(\ul\rho)$. Let $h$ be a zero-boundary GFF on $\BB D$ independent from $\eta$. Fix some $z \in \bdy \BB D\backslash \{1\}$.  
Let $V\subset \BB D$ be a simply connected subdomain such that $\bdy V \cap \bdy \BB D$ contains a neighborhood of $1$ in $\bdy \BB D$ and $V$ lies at positive distance from the target point and all of the force points of $\eta$ and from $z$. Let $\tau_V$ be the exit time of $\eta$ from $V$. 
By the form of the Loewner driving function for general radial SLE$_\kappa(\ul\rho)$, we find that the law of $\eta|_{[0,\tau_V]}$ is absolutely continuous with respect to the law of a radial $\op{SLE}_\kappa(2-\kappa)$ in $\BB D$, started from 1, targeted at a point at positive distance from $V$, with force point at $z$, stopped at the first time it exits $V$.   
Therefore, the statement of the theorem for $\eta$ follows from the statement for radial SLE$_\kappa(2-\kappa)$ (proven just above) provided we require that $X\subset \eta([0,\tau_V])$.

Now consider the case where $\eta$ is a chordal or radial SLE$_\kappa(\ul\rho)$ in $\BB D$ starting from $1$ with completely arbitrary choices of target point, weights, and force points (even force points precisely on either side of the starting point), stopped when it hits the continuation threshold. Let $(f_t)_{t\geq 0}$ be the centered Loewner maps for $\eta$. For $\ep > 0$, let $\tau_0^\ep = \sigma_0^\ep = 0$. Inductively, if $k\in\BB N$ and $\tau_{k-1}^\ep$ and $\sigma_{k-1}^\ep$ have been defined, let $\tau_k^\ep$ be the minimum of $\ep^{-1}$ and the smallest $t > \sigma_{k-1}^\ep$ for which the driving function $W_t$ lies at distance at least $\ep$ from the image of each of the force points of $\eta$ under $f_t$ and let $\sigma_k^\ep$ be the minimum of $\ep^{-1}$ and the smallest $t > \tau_k^\ep$ for which $W_t$ lies within distance $\ep/2$ of at least one of the images of the force points of $\eta$ under $f_t$. Note that each $\tau_k^\ep$ and $\sigma_k^\ep$ is a stopping time for $\eta$. By the domain Markov property of SLE$_\kappa(\ul\rho)$, the LQG coordinate change formula, and the preceding paragraph, we find that the statement of the corollary holds for $\eta$ provided we require that $X \subset \eta([\tau_k^\ep , \sigma_k^\ep])$ for some $k\in\BB N$. Taking a limit as $\ep \rta 0$ and using countable stability of Hausdorff dimension yields the statement of the theorem in the case of general chordal or radial SLE$_\kappa(\ul\rho)$. 

Finally, the case of whole-plane SLE$_\kappa( \rho)$ for $\rho > -2$ with $\rho\not=2-\kappa$ follows from the case of radial SLE$_\kappa( \rho)$ and an argument as in the case of radial SLE$_\kappa(2-\kappa)$. 
\end{proof}

\subsection{KPZ formula for quantum natural time of an SLE$_\kappa$ curve for $\kappa \in (4,8)$} 
\label{sec-sle-kpz'}

In this subsection we prove a variant of Theorem~\ref{thm-length-kpz} for SLE$_\kappa$ with $\kappa \in (4,8)$. 

If $\eta$ is some version of SLE$_{\kappa}$ for $\kappa \in (4,8)$ and $h$ is some variant of the GFF, then the natural quantum parameterization of $\eta$ with respect to $h$ is called the \emph{quantum natural time}. This parameterization is defined in~\cite[Definition 6.23]{wedges} in the case when $\eta$ is an ordinary whole-plane, chordal, or radial SLE$_{\kappa'}$ and $h$ is a whole-plane or free-boundary GFF plus a certain log singularity. In this case, the quantum surfaces parameterized by the bubbles disconnected from the target point by $\eta$ can be described by a Poisson point process parameterized by $\BB R$, and the quantum natural time of $\eta$ is the time parameterization corresponding to this Poisson point process. Note that the quantum natural time parameterization of a given segment of $\eta$ is determined by the restriction of $h$ to an arbitrary small neighborhood of that segment, since it depends only on the quantum areas or lengths of the small bubbles cut out by that segment of $\eta$. 
Hence quantum natural time in the case of an SLE$_{\kappa }(\ul\rho)$ process and a distribution which locally looks like a free-boundary GFF can be defined using local absolute continuity.

\begin{thm}  \label{thm-length-kpz'}
Let $\kappa\in (4,8)$, let $\ul\rho$ be a finite vector of weights, and let $\eta$ be a chordal or radial SLE$_\kappa(\ul\rho)$ in a simply connected domain $D\subsetneq \BB C$ with some choice of starting point, target point and force points, stopped when it hits the continuation threshold  (which may be infinite). Let $h$ be a free-boundary GFF on $D$ (with the additive constant fixed in some arbitrary way) independent from $\eta$. Suppose that $\eta$ is parameterized by $\gamma$-quantum natural time with respect to $h$, $\gamma = 4/\sqrt\kappa$. Let $X$ be a random subset of $\eta\setminus \bdy D$ which is independent from $h$. Then almost surely, 
\eqb \label{eqn-length-kpz'}
\dim_{\mcl H} X  
= \left( 1 + \frac{4}{\gamma^2} \right)\dim_{\mcl H} \eta^{-1}(X) - \frac{2}{\gamma^2} (\dim_{\mcl H} \eta^{-1}(X) )^2  .
\eqe   
The same holds if we make one or both of the following changes (where $Q = 2/\gamma + \gamma/2$ is as in~\eqref{eqn-lqg-coord}). 
\begin{itemize}
\item Replace $\eta$ by a whole-plane SLE$_\kappa(\rho)$ for $\rho > -2$ and replace $h$ by a whole-plane GFF. 
\item In the chordal or radial case, replace $h$ by a zero-boundary GFF on $D$, or for $D=\BB H$ replace $h$ by the circle average embedding into $\BB H$ of a quantum disk (with fixed area, boundary length, or both), an $\alpha$-quantum wedge for $\alpha \leq Q $, or a single bead of an $\alpha$-quantum wedge for $\alpha\in(Q,Q+\gamma/2)$. In the whole-plane case, replace $h$ by circle average embedding into $\BB C$ of a quantum sphere, or an $\alpha$-quantum cone for $\alpha < Q $.
\end{itemize}
\end{thm}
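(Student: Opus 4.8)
The plan is to follow the proof of Theorem~\ref{thm-length-kpz} essentially line for line, replacing the stable-$1/2$ subordinator coming from the quantum length parameterization with the subordinator that encodes quantum natural time. First I would establish the analogue of Lemma~\ref{prop-length-kpz0}: let $\kappa\in(4,8)$, let $(\BB C,h,0,\infty)$ be a $\gamma$-quantum cone with $\gamma=4/\sqrt\kappa$, and let $\eta$ be a whole-plane SLE$_\kappa(2-\kappa)$ from $0$ to $\infty$ independent from $h$, parameterized by $\gamma$-quantum natural time. Couple $\eta$ with an independent whole-plane space-filling SLE$_{16/\kappa}$ (note $16/\kappa\in(2,4)\subset(0,4)$, so it is an honest curve filling $\BB C$; here the roles of $\kappa$ and $16/\kappa$ are swapped relative to the $(0,4)$ case) from $\infty$ to $\infty$ parameterized by quantum mass with respect to $h$, with $\eta'(0)=0$. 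By~\cite[Footnote~9]{wedges} and~\cite[Theorem~1.1]{ig4}, one of the outer boundaries of $\eta'((-\infty,0])$ is a whole-plane SLE$_\kappa(2-\kappa)$, so we may take $\eta$ to be (say) the right outer boundary; then $\eta$ is determined by $\eta'$ viewed modulo parameterization, hence independent from $h$.

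The key structural input is the peanosphere description: if $Z_t=(L_t,R_t)$ is the pair of left/right boundary length processes for $\eta'$, then $R$ is a constant multiple of linear Brownian motion by~\cite[Theorem~1.13]{wedges}, and $\eta$ traces out the points $\eta'(t)$ with $R_t$ at a running infimum relative to time $0$. The crucial point is that the amount of quantum natural time of $\eta$ that has elapsed by the time $\eta'$ has covered up quantum length $r$ of the right boundary is, by the very definition of quantum natural time via the Poisson point process of bubbles (i.e.\ the excursions of $R$ above its running infimum), exactly the local time at level $-r$ of $R$ — equivalently a constant multiple of the local time of a linear Brownian motion. Thus if $S_r$ denotes the first time $\eta'$ has covered $r$ units of right-boundary length, then $r\mapsto S_r$ is (a constant multiple of) the inverse local time process of a linear Brownian motion, which is a stable subordinator of index $1/2$; this is the same index as in the $(0,4)$ case, and it is again the place where Hawkes' uniform dimension result~\cite[Theorem~4.1]{hawkes-uniform} applies to give $\dim_{\mcl H}(\eta')^{-1}(X)=\tfrac12\dim_{\mcl H}\eta^{-1}(X)$ for all $X\subset\eta$ independent from $h$ simultaneously. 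Here I would need to take a little care over the exact relationship between the natural time parameterization of $\eta$ and the inverse local time of $R$: the natural time clock runs at a constant speed along the time set at which $\eta$ is on the right boundary of $\eta'$, and this set is precisely $\{S_r:r\ge0\}$ together with the endpoints of excursions, so that $\eta'(S_r)=\eta(cr)$ for a deterministic constant $c>0$; absorbing $c$ changes nothing about dimensions.

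Feeding this into~\cite[Theorem~1.1]{ghm-kpz}, which for the space-filling SLE$_{16/\kappa}$ on a $\gamma$-quantum cone reads $\dim_{\mcl H}X=(2+\tfrac{\gamma^2}{2})\dim_{\mcl H}(\eta')^{-1}(X)-\tfrac{\gamma^2}{2}(\dim_{\mcl H}(\eta')^{-1}(X))^2$, and substituting $\dim_{\mcl H}(\eta')^{-1}(X)=\tfrac12\dim_{\mcl H}\eta^{-1}(X)$, yields after rearrangement the claimed formula~\eqref{eqn-length-kpz'} (with $\tfrac{\gamma^2}{2}\cdot\tfrac14=\tfrac{\gamma^2}{8}$ and $2+\tfrac{\gamma^2}{2}$ halved giving $1+\tfrac{\gamma^2}{4}$; note $1+\gamma^2/4 = 1+4/\gamma^2$ and $\gamma^2/8 = 2/\gamma^2$ are just rewritings using $\gamma=4/\sqrt\kappa$, so the algebra is identical to Remark~\ref{remark-kpz}, up to the relabelling). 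Then I would upgrade from whole-plane SLE$_\kappa(2-\kappa)$ on a quantum cone to the general statement exactly as in the proof of Theorem~\ref{thm-length-kpz}: reduce the quantum cone/wedge/sphere/disk cases to the whole-plane or free-boundary GFF by countable stability of Hausdorff dimension and absolute continuity of the GFF on domains bounded away from $0,\infty,\bdy\BB D$; pass from whole-plane to radial SLE$_\kappa(2-\kappa)$ in $\BB C\setminus\ol{\BB D}$ via the reverse Loewner flow and the LQG coordinate change formula~\eqref{eqn-lqg-coord}; pass from radial SLE$_\kappa(2-\kappa)$ to general radial (and, by the Schramm--Wilson coordinate change~\cite[Theorem~3]{sw-coord}, chordal) SLE$_\kappa(\ul\rho)$ by the Loewner-driving-function absolute continuity argument applied on a subdomain $V$ bounded away from the force points and target, and then exhaust all of $\eta$ by the stopping-time decomposition $\eta([\tau_k^\ep,\sigma_k^\ep])$ and take $\ep\to0$; finally handle whole-plane SLE$_\kappa(\rho)$ for $\rho\ne2-\kappa$ as in the radial-to-whole-plane step. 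One point requiring a sentence of justification in this reduction, not present in the length case, is that the definition of quantum natural time for an SLE$_\kappa(\ul\rho)$ near a distribution that locally looks like a free-boundary GFF is itself only made via local absolute continuity (as recalled just above the statement), so the absolute continuity arguments must be arranged so that on each piece $\eta([\tau_k^\ep,\sigma_k^\ep])$ the natural time clock agrees, up to the multiplicative constant coming from the Radon--Nikodym derivative of the fields, with the natural time clock of the comparison process — but since dimension is insensitive to deterministic smooth time changes this causes no difficulty.

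The main obstacle I expect is the identification, in the base case, of the quantum natural time of $\eta$ with the inverse local time of the boundary length Brownian motion $R$ (equivalently, confirming that the subordinator $S$ has index $1/2$ with this parameterization rather than some other index). Everything else is a transcription of the $(0,4)$ argument with $\kappa$ and $16/\kappa$ interchanged, but the correct matching of the natural-time clock to the excursion/local-time structure of the peanosphere Brownian motion — which is exactly the content that makes $\gamma=4/\sqrt\kappa$ rather than $\gamma=\sqrt\kappa$ the relevant parameter in this phase — is the step where one must invoke~\cite[Definition~7.14]{wedges} and the Poisson-point-process description of the bubbles carefully, and it is the step that genuinely distinguishes Theorem~\ref{thm-length-kpz'} from Theorem~\ref{thm-length-kpz}.
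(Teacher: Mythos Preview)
Your base-case setup does not work. For $\kappa\in(4,8)$ we have $16/\kappa\in(2,4)$, and space-filling SLE is only defined for parameter $>4$; there is no ``whole-plane space-filling SLE$_{16/\kappa}$'' here. In the peanosphere the space-filling curve on a $\gamma$-quantum cone has parameter $16/\gamma^2$, so with $\gamma=4/\sqrt\kappa$ the space-filling curve is $\eta'=\mathrm{SLE}_\kappa$ itself. But then the outer boundary of $\eta'((-\infty,0])$ is an SLE$_{16/\kappa}$-type (simple) curve, not an SLE$_\kappa$-type curve, so you cannot realize $\eta$ as a boundary of $\eta'$ at all. The running-infimum/index-$1/2$ subordinator picture you describe is exactly the mechanism behind Theorem~\ref{thm-length-kpz}, and it produces the boundary curve, not the non-simple $\eta$.

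The paper instead realizes $\eta$ inside the \emph{same} space-filling SLE$_\kappa$ via the set $\mcl A$ of ancestor-free times of the two-dimensional Brownian motion $Z=(L,R)$: the inverse local time $T$ of $\mcl A$ is shown to be a stable subordinator, and since $\dim_{\mcl H}\mcl A=\kappa/8$ its index is $\kappa/8$, not $1/2$. One then identifies $\eta'(T(s))$ with a whole-plane SLE$_\kappa(\kappa-6)$ parameterized by quantum natural time (via~\cite[Lemma~12.4 and Proposition~12.3]{wedges}) and applies Hawkes' theorem to get $\dim_{\mcl H}(\eta')^{-1}(X)=\tfrac{\kappa}{8}\dim_{\mcl H}\eta^{-1}(X)$. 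Plugging this into the ghm-kpz formula with $\gamma=4/\sqrt\kappa$ yields~\eqref{eqn-length-kpz'}. Your algebraic check should have flagged the problem: the identity ``$1+\gamma^2/4=1+4/\gamma^2$'' you invoke forces $\gamma=2$, which never holds for $\kappa\in(4,8)$; with the index-$1/2$ subordinator you would obtain $(1+\gamma^2/4)d-\tfrac{\gamma^2}{8}d^2=(1+4/\kappa)d-\tfrac{2}{\kappa}d^2$, which is not~\eqref{eqn-length-kpz'}. The absolute-continuity reductions you outline at the end are fine and match the paper, but the core lemma needs the ancestor-free-time construction and the $\kappa/8$ index.
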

\begin{proof}
As in the proof of Lemma~\ref{thm-length-kpz} we first treat a single special case using~\cite[Theorem~4.1]{hawkes-uniform} and~\cite[Theorem 1.1]{ghm-kpz} and then extend to the other SLE$_{\kappa }$-type processes and GFF-type distributions in the theorem statement using local absolute continuity. 

We start with the case when $\eta$ is a whole-plane SLE$_\kappa(\kappa -6)$ from 0 to $\infty$ and $h$ is the circle average embedding of a $\gamma$-quantum cone. 
Let $\eta'$ be a whole-plane space-filling SLE$_{\kappa}$ from $\infty$ to $\infty$, independent from $h$ and parameterized by $\gamma$-quantum mass with respect to $h$ in such a way that $\eta'(0) = 0$. 
For $t \geq 0$, let $L_t$ and $R_t$ be the change in the left and right quantum boundary lengths of $\eta'$ with respect to $h$ between time $0$ and time $t$, as in~\cite[Theorem~1.9]{wedges} and let $Z = (L,R)$. That theorem tells us that $Z$ has the law of a correlated two-dimensional Brownian motion with correlation $-\cos(4\pi/\kappa)$. 

Following~\cite[Section 1.4.2]{wedges}, we say that a time $t \in [0,\infty)$ is \emph{ancestor free} if there does not exist $s\in [0,t]$ such that $L_s = \inf_{r \in [s,t]} L_r$ and $R_s = \inf_{r\in [s,t]} R_r$.  Let $\mcl A\subset[0,\infty)$ denote the set of ancestor free times, and for any $t\geq 0$ let $\theta_t:[0,\infty)\to[t,\infty)$ denote the shift operator.
First we claim that the set $\mcl A$ of ancestor free times is a regenerative set, i.e., we claim that for any stopping time $S$ for the filtration generated by $(\mcl A\cap[0,s])_{s\geq 0}$ for which $S\in\mcl A$ a.s., the set $\mcl A\circ\theta_{S}=\{s\geq 0\,:\,s+S\in\mcl A \}$ has the same distribution as $\mcl A$ and is independent of $\mcl A\cap[0,t]$. Our claim is immediate by the strong Markov property of Brownian motion, since any such stopping time $S$ is also a stopping time for $Z$. 

Since $\mcl A$ is regenerative, it can be parametrized by a local time (see \cite[Section 2.1]{bertoin-sub} and the text above \cite[Proposition 1.13]{wedges}). Let $s\mapsto T(s)$ be the right continuous inverse local time of the ancestor free times of $Z$ relative to time 0, as in~\cite[Proposition 10.3]{wedges} and let $ \eta(s) := \eta'(T(s))$ for $s\geq 0$. By~\cite[Lemma~10.4]{wedges}, the time reversal of $ \eta$ is the counterflow line (in the sense of~\cite{ig4}) from $\infty$ to 0 of the whole-plane GFF used to construct $\eta'$ which travels through $\eta'([0,\infty))$. By the discussion just after~\cite[Theorem~1.6]{ig4}, this counterflow line is a whole-plane SLE$_{\kappa }(\kappa -6)$ process from $\infty$ to 0 so by reversibility~\cite[Theorem~1.20]{ig4}, $\eta$ has the law of a whole-plane SLE$_\kappa(\kappa-6)$ from 0 to $\infty$. We see from the proof of~\cite[Proposition 10.3]{wedges} that $\eta$ is parameterized by quantum natural time (up to multiplication by a deterministic constant), since it follows from this proof that the local time at the ancestor free times of $Z$ can be obtained by counting the number of bubbles enclosed by $\eta$ with quantum boundary length in an interval $[2^{-(k+1)},2^{-k}]$ for $k\in\BB N$, normalizing appropriately, and sending $k\rta\infty$. It is immediate from \cite[Definition 6.23]{wedges} that the same property holds for the quantum natural time of $\eta$.

The range of $T$ is the set $\mcl A$ of ancestor free times of $Z$. The law of $\mcl A$ is scale invariant because of the scale invariance of $Z$. The time at which the local time at $\mcl A$ exceeds any given level $s$ is a stopping time for $Z$, which implies by the strong Markov property of $Z$ that $T\circ\theta_s$ has the same law as $T$ and is independent of $T|_{[0,s]}$. Therefore $T$ has independent stationary increments. By~\cite[Lemma~1.11 and Theorem~3.2]{bertoin-sub}, $T$ is a stable subordinator. By~\cite[Example 2.3]{ghm-kpz}, the Hausdorff dimension of the ancestor free times of $Z$ is a.s.\ equal to $\kappa /8$, so the index of $T$ is $\kappa/8$. By~\cite[Theorem~4.1]{hawkes-uniform}, if $X$ is a subset of $ \eta$, then a.s.\ 
\eqbn
\dim_{\mcl H} (\eta')^{-1}(X) =  \frac{\kappa}{8} \dim_{\mcl H}  \eta^{-1} (X)  .
\eqen
By combining this with~\cite[Theorem~1.1]{ghm-kpz} as in the proof of Theorem~\ref{thm-length-kpz}, we obtain the theorem in the case of a whole-plane SLE$_\kappa(\kappa-6)$ and a $\gamma$-quantum cone with the circle average embedding. The general case follows from this special case and an absolute continuity argument as in the proof of Theorem~\ref{thm-length-kpz}.
\end{proof}

\subsection{Proof of Theorems~\ref{thm-uniform-dim-time} and~\ref{thm-uniform-dim}}
\label{sec-main-proof}

In this subsection we combine the KPZ formulas of Theorem~\ref{thm-length-kpz} and~\cite[Theorem~4.1]{rhodes-vargas-log-kpz} to prove our formulas for the dimension of a set when it is ``zipped up" into an SLE curve.

\begin{proof}[Proof of Theorem~\ref{thm-uniform-dim-time}]
Suppose first that we are in the chordal case.
Let $\gamma = \sqrt\kappa$ (if $\kappa \in (0,4)$) or $\gamma = 4/\sqrt\kappa$ (if $\kappa \in (4,8)$). 
Let $Q$ be as in~\eqref{eqn-lqg-coord} for this choice of $\gamma$.
Let $h_0$ be a free-boundary GFF independent from $\eta$ and let $h := h_0 + \frac{2}{\sqrt\kappa} \log |\cdot|$. Also let $\nu_h$ be the $\gamma$-quantum length measure induced by $h$. 

Recall that for each capacity time $t>0$, the inverse centered Loewner map $f_t^{-1}$ has the same law as the time $t$ centered Loewner map for a reverse SLE$_\kappa$ flow~\cite{schramm-sle}. For $t > 0$, let $h^t :=  h\circ f_t^{-1} + Q\log |(f_t^{-1})'| $.  By~\cite[Theorem~1.2]{shef-zipper}, for each $t > 0$ we have $h^t \eqD h$, modulo additive constant. 

In the case when $\kappa \in (0,4)$, we assume that $\eta$ is parameterized by $\nu_h$-length (which is well-defined by~\cite[Theorem~1.3]{shef-zipper}). In the case when $\kappa  > 4$, we assume that $\eta$ is parameterized by half-plane capacity and for $t>0$ we let $\eta^R_t$ be the be the right outer boundary of the hull generated by $\eta([0,t])$, viewed as a curve from $\eta(t)$ to the rightmost point of $\eta([0,t])\cap \BB R$.  By SLE duality, $\eta_t^R$ for a fixed capacity time $t>0$ is an SLE$_{16/\kappa}$-type curve. More precisely, the construction in~\cite[Sections~1.2.3 and~4.3]{ig4} implies that we can find a chordal space-filling SLE$_{\kappa}$ curve $\eta'$ which traces points in the same order as $\eta$ (in the case $\kappa \geq 8$, we have $\eta' = \eta$ and, when $\kappa \in (4,8)$, $\eta'$ can be obtained by iteratively filling in the ``bubbles" disconnected from $\infty$ by $\eta$).  For $t >0$ let $\sigma_t$ be the time such that $\eta'([0,\sigma_t])$ is the hull generated by $\eta([0,t])$ and for $z\in\BB H$ let $\tau_z$ be the time when $\eta'$ hits $z$. Also let $\acute\eta^R_{\tau_z} $ be the right outer boundary of $\eta'([0,\tau_z])$. The curve $\eta^R_t$ is a.s.\ covered by a countable union of curves of the form $\acute\eta_{\tau_z}^R$ for $z\in \BB Q^2\cap \BB H$. By the construction of space-filling SLE~\cite[Theorem~4.1]{ig4} the law of each curve $\acute\eta_{\tau_z}^R$ stopped at the first time it exits a bounded set at positive distance from $\BB H$ is absolutely continuous with respect to that of a whole-plane SLE$_{16/\kappa}(2-16/\kappa)$ stopped at the same time. In particular, Theorem~\ref{thm-length-kpz} applies to the field $h$ and the curve $\eta_t^R$. We henceforth assume that $\eta_t^R$ is parameterized by quantum length with respect to $h$ (which is well-defined by pushing forward the quantum length measure of $h$ under $f_t^{-1}$).

Now let $Y$ be as in the statement of the lemma and assume without loss of generality that $Y\subset [0,\infty)$. Let $h^t$ be defined as in the beginning of the proof and let
\[
\wh Y^t := \left\{ \nu_{h^t}([0,y]) \,:\, y\in  Y   \right\} .
\] 
By the LQG coordinate change formula and since $\eta$ is parameterized by $\gamma$-quantum length with respect to $h$, on the event $\{f_t^{-1}(Y) \subset \eta\}$ we a.s.\ have
\eqbn
\wh Y^t  =
\begin{cases}
C_t - \eta^{-1} (f_t^{-1} (Y)) ,\quad &\kappa \in (0,4) \\
 (\eta_t^R)^{-1} (f_t^{-1}(Y)) ,\quad &\kappa > 4  
\end{cases}
\eqen
where here $C_t >0$ is the total quantum length of $\eta$ run up to capacity time $t$. 
Therefore, Theorem~\ref{thm-length-kpz} implies that a.s.\ on the event $\{f_t^{-1}(Y) \subset \eta\}$, we have
\eqb \label{eqn-apply-length-kpz-time}
\dim_{\mcl H} f_t^{-1}(Y) = \left(1 + \frac{\gamma^2}{4} \right) \dim_{\mcl H} \wh Y^t     -  \frac{\gamma^2}{8} ( \dim_{\mcl H} \wh Y^t  )^2 .
\eqe
Since changing the additive constant (recall that $h^t \eqD h$, modulo additive constant) amounts to scaling the $\gamma$-quantum length measure by a positive constant,~\cite[Theorem~4.1]{rhodes-vargas-log-kpz} (see also~\cite[Remark~1.2]{ghm-kpz}) implies that a.s.\ 
\eqb \label{eqn-conf-dim-time}
\dim_{\mcl H}  Y  = \Psi_\gamma\left( \dim_{\mcl H} \wh Y^t  \right) ,  
\eqe 
with $\Psi_\gamma$ as in~\eqref{eqn-kpz-function}. Combining~\eqref{eqn-apply-length-kpz-time} and~\eqref{eqn-conf-dim-time} yields the statement of the theorem in the chordal case (here we note that $\Phi_\kappa = \Phi_{16/\kappa}$). The radial version follows from the same argument but with the radial reverse SLE/GFF coupling~\cite[Theorem~5.1]{qle} used in place of~\cite[Theorem~1.2]{shef-zipper}. The whole-plane case is immediate from the radial case since for any $s \in \BB R$, $f_s(\eta|_{[s,\infty)})$ has the law of a radial SLE$_\kappa$.
\end{proof}

We next prove Theorem~\ref{thm-uniform-dim}. The proof is similar to that of Theorem~\ref{thm-uniform-dim-time}, but since we are interested in a complementary connected component of the whole curve (rather than the curve run up to a fixed time) we use a quantum cone or a quantum wedge instead of a free-boundary GFF. There are minor additional complications arising from the possibility that $\eta$ intersects itself. 
One might think that Theorem~\ref{thm-uniform-dim} could be deduced from Theorem~\ref{thm-uniform-dim-time} via a local absolute continuity argument, but we do not see a way to do this since the conformal maps $f_t$ and $g_U$ depend on the whole curve, not just its local behavior.

\begin{proof}[Proof of Theorem~\ref{thm-uniform-dim}]
Suppose we are in the whole-plane case and that $\kappa \in (0,4)$. We note that in this case, $\eta$ intersects itself (and hence has more than one complementary connected component) if and only if $\rho < \kappa/2-2$~\cite[Lemmas 2.4 and 2.6]{ig4}.  
Let $\gamma  = \sqrt\kappa$ and let $Q$ be as in~\eqref{eqn-lqg-coord}. Let $(\BB C , h  , 0 , \infty)$ be a $\big( Q - \frac{1}{2\gamma} (\rho + 2) \big)$-quantum cone (so the weight of the quantum cone, as defined in~\cite{wedges}, is $\rho + 2$), independent from $\eta$, with the circle average embedding. Throughout we assume that $\eta$ is parameterized by $\gamma$-quantum length with respect to $h$. 

Let $\mcl F$ be the $\sigma$-algebra generated by the ordered sequence of quantum boundary lengths of elements of $\mcl U$ with respect to $h$, so that $\mcl F$ is trivial if $\rho \geq \kappa/2-2$, and we order the elements of $\mcl U$ by the time at which $\eta$ finishes tracing their boundary. 
Let $U\in\mcl U$ be chosen in an $\mcl F$-measurable manner. For $a > 0$, let
\eqbn
g_{U,a} := g_U(a \cdot) \quad \op{and} \quad \wt h^a := h \circ g_{U,a}  + Q\log |g_{U,a}'|
\eqen
with $g_U : \BB H\rta U$ as in the statement of the theorem. By~\cite[Theorem~1.5]{wedges}, the conditional law given $\mcl F$ of the quantum surface $(\BB H ,  \wt h^a , 0,\infty)$ is that of a $\big(\frac{\gamma}{2} + Q - \frac{1}{\gamma}(\rho+2)\big)$-quantum wedge (if $\rho \geq \kappa/2-2$) or a single bead of such a wedge with given quantum boundary length (if $\rho \in (-2, \kappa/2-2)$).  

Let $A > 0$ be chosen so that $\wt h^A$ is the circle average embedding of the quantum surface as defined in Definition~\ref{def:circleaverage}.  For each bounded subset of $\BB H$ at positive distance from $\{0\}\cup \bdy \BB D$, the law of the field $\wt h^{A}$ restricted to this set is absolutely continuous with respect to the law of a free-boundary GFF with additive constant chosen such that the semicircle average over $\BB H\cap \bdy \BB D$ is zero, restricted to the same set (this is immediate from the definitions in~\cite[Sections 4.2 and 4.4]{wedges}).   
For $b > 0$, we have $\wt h^{b A} = \wt h^A(b^{-1} \cdot) + Q\log b^{-1}$. 
By the conformal invariance of the GFF and since the law of a GFF plus a deterministic constant is mutually absolutely continuous with respect to the law of a GFF when restricted to bounded sets, it follows that we have the same absolute continuity statement with $\wt h^{b A}$ in place of $\wt h^{ A}$ and $\BB H \cap B_b(0)$ in place of $\BB H \cap \BB D$ for each fixed $b > 0$.  

Now let $Y \subset [0,\infty)$ be as in the statement of the lemma and note that (since we are in the whole-plane case and $\kappa \in (0,4)$) all of $\bdy U$ is traced by $\eta$ and $x_U \not= y_U$, so $U \in \mcl U^- \cap \mcl U^+$. 
For $a > 0$, let
\[
\wh Y^a := \left\{ \nu_{\wt h^{a}}([0,y]) \,:\, y\in  Y   \right\} .
\] 
By the LQG coordinate change formula and since $\eta$ is parameterized by $\gamma$-quantum length with respect to $h$, for each $a > 0$ it is a.s.\ the case that
\eqb \label{eqn-lqg-length-set}
\wh Y^a = \eta^{-1} (g_{U,a}(Y)) - C 
\eqe 
where $C \geq 0$ is a random constant.
In particular $\dim_{\mcl H} \wh Y^a = \dim_{\mcl H} \eta^{-1} (g_{U,a}(Y))$ a.s., so by Theorem~\ref{thm-length-kpz} and the absolute continuity considerations of the preceding paragraph, for each fixed $b>0$, it is a.s.\ the case that
\eqbn
\dim_{\mcl H} g_{U,bA}(Y) = \left(1 + \frac{\gamma^2}{4} \right) \dim_{\mcl H} \wh Y^{bA}    -  \frac{\gamma^2}{8} ( \dim_{\mcl H} \wh Y^{bA} )^2.  
\eqen
This implies that a.s.\
\eqb \label{eqn-apply-length-kpz}
\dim_{\mcl H} g_{U,a}(Y) = \left(1 + \frac{\gamma^2}{4} \right) \dim_{\mcl H} \wh Y^{a}    -  \frac{\gamma^2}{8} ( \dim_{\mcl H} \wh Y^{a} )^2, \quad \text{for Lebesgue a.e.\ $a > 0$. } 
\eqe
By~\cite[Theorem~4.1]{rhodes-vargas-log-kpz} and the above absolute continuity considerations, for each fixed $b > 0$ it is a.s.\ the case that
\eqbn
\dim_{\mcl H}  Y  = \Psi_\gamma\left( \dim_{\mcl H} \wh Y^{bA }   \right) ,  
\eqen
with $\Psi_\gamma$ as in~\eqref{eqn-kpz-function}.
Therefore, a.s.\ 
\eqb \label{eqn-conf-dim0}
\dim_{\mcl H}  Y  = \Psi_\gamma\left( \dim_{\mcl H} \wh Y^{a }   \right)
\quad \text{for Lebesgue a.e.\ $a > 0$. }
\eqe   
Since there are only countably many possible choices of $U\in\mcl U$, combining~\eqref{eqn-apply-length-kpz} and~\eqref{eqn-conf-dim0} yields the statement of the theorem in the whole-plane case when $\kappa \in (0,4)$. 

When $\kappa \in (0,4)$, the statement in the chordal case is proven via the same argument, except that we start with a $2 \gamma + 2Q -\frac{1}{\gamma} (\rho^L + \rho^R-4)$-quantum wedge (equivalently, a quantum wedge of weight $\rho^L + \rho^R -4$) parameterized by a distribution $h$ on $\BB H$ and apply~\cite[Theorem~1.2]{wedges} instead of~\cite[Theorem~1.5]{wedges}. In the case when the parameter of the wedge is in $(Q,Q+\gamma/2)$ (so that it consists of a string of beads) we take $h$ to be the distribution corresponding to a single bead of this wedge. 

When $\kappa \in (4,8)$, we set $\gamma = 4/\sqrt\kappa$ instead of $\gamma = \sqrt\kappa$. The whole-plane (resp.\ chordal) case is treated using a similar argument to the one above except that we start with a $\big( Q - \frac{\gamma}{8} (\rho + 2) \big)$-quantum cone (resp.\ a 
$\big( \frac{4}{\gamma}   - \frac{\gamma }{4}(\rho^L + \rho^R) \big)$-quantum 
wedge, or a single bead of such a wedge in the beaded case) and apply~\cite[Theorem~1.17]{wedges} (resp.~\cite[Theorem~1.16]{wedges}). Here we note that $\Phi_\kappa = \Phi_{16/\kappa}$ and that by SLE duality~\cite{zhan-duality1,zhan-duality2,dubedat-duality,ig1,ig4}, $\bdy U \cap \eta$ is an SLE$_{16/\kappa}$-type curve, so we can apply Theorem~\ref{thm-length-kpz} in essentially the same manner as in the case when $\kappa \in (0,4)$. See the proof of Theorem~\ref{thm-uniform-dim-time} for a similar argument.
\end{proof}

\section{Open questions}
\label{sec-open-problems}

Here we list some open problems which are related to the main results of this paper.

\begin{enumerate}
\item Consider the following heuristic argument for computing the multifractal spectrum of SLE (originally obtained rigorously in~\cite{gms-mf-spec}) using Theorem~\ref{thm-uniform-dim}. Recall that the multifractal spectrum of, say, a whole-plane SLE$_\kappa$ curve $\eta$ is the function $\xi = \xi_\kappa : [-1,1] \rta [0,\infty)$ defined by $\xi(s) = \dim_{\mcl H}\wt\Theta^s$, where 
\eqbn
\wt\Theta^s := \left\{x \in \BB R \,:\, \lim_{\ep\rta 0} \frac{|f'(x  + i\ep)|}{\log \ep^{-1}} = s \right\}
\eqen
for $f : \BB H\rta \BB C\setminus \eta$ a conformal map (it is easy to see that the definition of $\xi$ does not depend on $f$).   
Suppose we are given a deterministic Borel set $Y\subset \BB R$. The points of each set $\wt\Theta^s$ should be evenly spread out over $\BB R$, and we know how much $f$ expands small intervals centered at points of $\wt\Theta^s$. So, it should be possible to derive a formula for $\dim_{\mcl H} f\left( Y \cap \bigcup_{t\in [s-\delta , s+\delta]} \wt\Theta^t  \right)  $ in terms of $\dim_{\mcl H}(Y)$, $\xi$, and $\delta$ for each given $s \in [-1,1]$ and $\delta> 0$. Sending $\delta \rta 0$ and maximizing over $s$ yields a formula for $\dim_{\mcl H}f(Y)$ in terms of $\dim_{\mcl H} Y$ and $\xi$. On the other hand, Theorem~\ref{thm-uniform-dim} gives a formula for $\dim_{\mcl H} f(Y) $ in terms of $\dim_{\mcl H}Y$ and $\kappa$ (for a large number of possible choices of $f$). Comparing these two formulas and letting $\dim_{\mcl H} Y$ vary should allow one to recover $\xi$. Can the above argument be made rigorous? \label{item-multifractal}
\item Does the formula of Theorem~\ref{thm-uniform-dim} hold a.s.\ for every choice of the conformal map $g_U : \BB H\rta U$ simultaneously? The proof of the theorem shows that to obtain an affirmative answer to this question it would be enough to show that for a fixed choice of set $X\subset [0,\infty)$, the KPZ formula of~\cite[Theorem~4.1]{rhodes-vargas-log-kpz} (c.f.~\cite[Remark~1.2]{ghm-kpz}) holds simultaneously a.s.\ for the image of $X$ under every conformal map $\BB H\rta\BB H$ which sends $X$ into $[0,\infty)$. Similarly, does the statement of Theorem~\ref{thm-uniform-dim-time} hold a.s.\ for all times $t$ simultaneously? \label{item-all-map}
\item None of Theorems~\ref{thm-uniform-dim-time},~\ref{thm-uniform-dim},~\ref{thm-length-kpz},~or~\ref{thm-length-kpz'} applies in the case when $\kappa=4$. Can these theorems be extended to the case $\kappa = 4$, possibly using critical ($\gamma=2$) LQG~\cite{shef-deriv-mart,shef-renormalization}? \label{item-kappa=4}
\end{enumerate}

\bibliography{cibib} 
\bibliographystyle{hmralphaabbrv}

\end{document}